\theoremstyle{plain}
\numberwithin{equation}{section}
\newtheorem{theorem}{Theorem}[section]
\newtheorem*{theoremm}{Theorem}
\newtheorem{lemma}[theorem]{Lemma}
\newtheorem{question}[theorem]{Question}
\newtheorem{conjecture}[theorem]{Conjecture}
\newtheorem{example}[theorem]{Example}
\DeclareMathOperator{\trdeg}{trdeg}
\newcommand{\field}[1]{\mathbb{#1}}
\newcommand{\N}{\field{N}}
\newcommand{\R}{\field{R}}
\newcommand{\Q}{\field{Q}}
\newcommand{\K}{\field{K}}
\begin{document}
\title[Splitting multidimensional necklaces]{Obstacles for splitting multidimensional necklaces}

\author[Micha\l\ Laso\'{n}]{Micha\l\ Laso\'{n}}

\dedicatory{\upshape
\'{E}cole Polytechnique F\'{e}d\'{e}rale de Lausanne, Chair of Combinatorial Geometry, EPFL-SB-MATHGEOM/DCG, Station 8, CH-1015 Lausanne, Switzerland\\ \textmtt{michal.lason@epfl.ch},\\
Institute of Mathematics of the Polish Academy of Sciences,\\ ul.\'{S}niadeckich 8, 00-656 Warszawa, Poland\\ \textmtt{michalason@gmail.com}}

\thanks{Research supported by Polish National Science Centre grant no. 2012/05/D/ST1/01063 and by Swiss National Science Foundation Grants 200020-144531 and 200021-137574.}
\subjclass[2010]{05D99, 54H99, 12E99, 52C45}
\keywords{necklace splitting, multidimensional necklace, measurable coloring, mass equipartition}

\begin{abstract}
The well-known ``necklace splitting theorem'' of Alon \cite{Al87} asserts that every $k$-colored necklace can be fairly split into $q$ parts using at most $t$ cuts, provided $k(q-1)\leq t$. In a joint paper with Alon et al. \cite{AlGrLaMi09} we studied a kind of opposite question. Namely, for which values of $k$ and $t$ there is a measurable $k$-coloring of the real line such that no interval has a fair splitting into $2$ parts with at most $t$ cuts? We proved that $k>t+2$ is a sufficient condition (while $k>t$ is necessary).

We generalize this result to Euclidean spaces of arbitrary dimension $d$, and to arbitrary number of parts $q$. We prove that if $k(q-1)>t+d+q-1$, then there is a measurable $k$-coloring of $\R^d$ such that no axis-aligned cube has a fair $q$-splitting using at most $t$ axis-aligned hyperplane cuts. Our bound is of the same order as a necessary condition $k(q-1)>t$ implied by \cite{Al87}. Moreover for $d=1,q=2$ we get exactly the result of \cite{AlGrLaMi09}. 

Additionally, we prove that if a stronger inequality $k(q-1)>dt+d+q-1$ is satisfied, then there is a measurable $k$-coloring of $\R^d$ with no axis-aligned cube having a fair $q$-splitting using at most $t$ arbitrary hyperplane cuts. The proofs are based on the topological Baire category theorem and use algebraic independence over suitably chosen fields.
\end{abstract}

\maketitle

\section{Introduction}

The following problem is well-known as the necklace splitting problem: \smallskip

\emph{A necklace colored with $k$ colors has been stolen by $q$ thieves. They intend to share the necklace fairly, but they do not know what are the values of different colors. Therefore they want to cut it so that the resulting pieces can be fairly split into $q$ parts, which means that each part captures the same amount of every color. What is the minimum number of cuts they have to make?}\smallskip 

To be more precise a \emph{discrete $k$-colored necklace} is an interval in $\N$ colored with $k$ colors. A \emph{fair $q$-splitting of size $t$} is a division of the necklace using $t$ cuts into $t+1$ intervals, which can be split into $q$ parts with equal number of integers of every color. 

Goldberg and West \cite{GoWe85} proved that every discrete $k$-colored necklace with number of integers of every color divisible by $2$ has a fair $2$-splitting of size at most $k$ (see also \cite{AlWe86} for a short proof using the Borsuk-Ulam theorem, and \cite{Ma03} for other applications of the Borsuk-Ulam theorem in combinatorics). It is easy to see that it is best possible. Just consider a necklace consisting of $k$ pairs of consecutive integers, each pair colored with a different color. 

A famous generalization due to Alon \cite{Al87} to the case of $q$ thieves, asserts that every discrete $k$-colored necklace with number of integers of every color divisible by $q$ has a fair $q$-splitting of size at most $k(q-1)$. The same argument as before shows that the number of cuts is optimal.

All proofs of these discrete theorems go through a continuous version of the problem, in which a necklace is an interval, a coloring is a partition of a necklace into Lebesgue measurable sets, and a splitting is fair if each part captures the same measure of every color. 

Alon \cite{Al87} got even more general version in which a coloring is replaced by a collection of arbitrary continuous probability measures (this generalizes a theorem of Hobby and Rice \cite{HoRi65}). 

\begin{theorem}[Alon, \cite{Al87}]\label{al}
Let $\mu_1,\dots,\mu_k$ be continuous probability measures on the interval $[0,1]$. Then there exists a division of  $[0,1]$ using at most $k(q-1)$ cuts into subintervals, which can be split into $q$ parts having equal measure $\mu_i$ for each $i$. 
\end{theorem}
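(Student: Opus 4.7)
The plan is to follow Alon's original topological approach. First I would reduce to the case where $q = p$ is prime: if the theorem is known for $q_1$ and $q_2$ thieves, I split the necklace fairly among $q_1$ super-groups using at most $k(q_1-1)$ cuts, and then within each super-group's union of intervals (concatenated in their natural order) apply the theorem for $q_2$ thieves, adding at most $k(q_2-1)$ cuts per super-group. The total, $k(q_1-1) + q_1 k(q_2-1) = k(q_1 q_2 - 1)$, matches the required bound for $q_1 q_2$ thieves, because boundaries between super-group-$t$ intervals in the concatenation are already cuts from step one. So we may assume $q=p$ is prime.

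For prime $p$ set $N = k(p-1)+1$ and parametrize candidate splittings by the topological join $X = (\field{Z}/p)^{*N}$. A point $\sum_{i=1}^N t_i\,g_i$ with $t_i \geq 0$, $\sum_i t_i = 1$ and $g_i \in \field{Z}/p$ encodes a division of $[0,1]$ into $N$ consecutive subintervals of lengths $t_i$ together with the assignment of the $i$-th subinterval to thief $g_i$. After merging adjacent subintervals carrying the same label and discarding empty ones, this uses at most $N-1 = k(p-1)$ cuts.

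Next I set up a test map. Let $W = \{(y_1,\dots,y_p) \in \R^p : y_1 + \cdots + y_p = 0\}$, a real $(p-1)$-dimensional representation of $\field{Z}/p$ under cyclic coordinate permutation, and define
\[
f \colon X \longrightarrow W^{\oplus k}, \qquad f_{i,j}(x) = \mu_i\bigl(S_j(x)\bigr) - \frac{1}{p},
\]
where $S_j(x) \subset [0,1]$ is the union of subintervals assigned to thief $j$. Because each $\mu_i$ is atomless, $f$ is continuous, and by construction it is $\field{Z}/p$-equivariant for the simultaneous-translation action on $X$ and the cyclic action on $W^{\oplus k}$. A zero of $f$ is precisely a fair $p$-splitting with at most $k(p-1)$ cuts, so the theorem is reduced to showing $f$ vanishes somewhere.

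The main obstacle, and the substantive content, is the equivariant non-existence statement: every continuous $\field{Z}/p$-equivariant map $X \to W^{\oplus k}$ must hit $0$. This is the Dold / $\field{Z}/p$-Borsuk--Ulam theorem: the join $X = (\field{Z}/p)^{*N}$ is $(N-2)$-connected with essentially free $\field{Z}/p$-action, while the unit sphere $S(W^{\oplus k})$ has dimension $k(p-1)-1 = N-2$, so no $\field{Z}/p$-equivariant map $X \to S(W^{\oplus k})$ exists. I would invoke this as a black box. The one subtlety, the fixed locus of the diagonal $\field{Z}/p$-action on $X$ (points where every label in $\field{Z}/p$ carries total weight $1/p$), is handled by the Fadell--Husseini ideal-valued index or a direct mod-$p$ Euler-class calculation, both standard in the necklace-splitting literature.
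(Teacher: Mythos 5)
Theorem \ref{al} is not proved in this paper at all: it is quoted from Alon's article \cite{Al87} and used as a black box, so there is no internal proof to compare your attempt against. Judged on its own terms, your outline is a correct rendition of the standard topological proof. The multiplicative reduction to prime $q$ is right, including the count $k(q_1-1)+q_1k(q_2-1)=k(q_1q_2-1)$ and the observation that concatenation boundaries are already cuts from the first stage. For prime $p$, the configuration space $(\mathbb{Z}/p)^{*N}$ with $N=k(p-1)+1$, the test map into $W^{\oplus k}$ (well defined and continuous at join points with $t_i=0$ because the ambiguous label carries a degenerate interval and the measures are atomless), and the appeal to Dold's theorem --- an $(N-2)$-connected source versus an $(N-2)$-dimensional target sphere --- are exactly the standard scheme. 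One side remark of yours is off, though harmlessly so: the $\mathbb{Z}/p$-action on the join $(\mathbb{Z}/p)^{*N}$ is genuinely free (if $h$ fixes $\sum_i t_ig_i$ then $hg_i=g_i$ for every $i$ with $t_i>0$, forcing $h=e$), and in any case Dold's theorem requires freeness only on the target. The place where primality of $p$ actually enters is that every nontrivial element of $\mathbb{Z}/p$ acts without fixed points on $S(W^{\oplus k})$, because the only cyclically invariant vector in $W$ is $0$; no Fadell--Husseini or Euler-class computation is needed. With that correction, the argument is complete modulo the (legitimately) black-boxed Dold theorem.
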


In this paper we are interested in multidimensional necklace splitting problem. Notice that every axis-aligned hyperplane in $\R^d$ is perpendicular to exactly one of axes and parallel to others. Those perpendicular to $i$-th axis we call \emph{aligned to $i$-th axis}. Hence any collection of $t$ axis-aligned hyperplane cuts gives a natural partition $t=t_1+\dots+t_d$, where exactly $t_i$ hyperplanes are aligned to $i$-th axis. By projecting, Alon's Theorem \ref{al} implies that for any $k$ continuous probability measures on $[0,1]^d$ there exists a fair $q$-splitting using at most $k(q-1)$ hyperplanes aligned to $1$-th axis. De Longueville and \v{Z}ivaljevi\'{c} generalized it to a version in which we can fix arbitrary partition $k(q-1)=t_1+\dots+t_d$. 

\begin{theorem}[de Longueville, \v{Z}ivaljevi\'{c}, \cite{LoZi08}]\label{lozi}
Let $\mu_1,\dots,\mu_k$ be continuous probability measures on $d$-di\-men\-sion\-al cube $[0,1]^d$. For any selection of non-negative integers $t_1,\dots,t_d$ satisfying $k(q-1)=t_1+\dots+t_d$, there exists a division of $[0,1]^d$ using only axis-aligned hyperplane cuts and at most $t_i$ aligned to $i$-th axis into cuboids, which can be split into $q$ parts having equal measure $\mu_i$ for each $i$. 
\end{theorem}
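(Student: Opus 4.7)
My plan is to follow the configuration-space / test-map paradigm that underlies Alon's proof of Theorem~\ref{al}, adapting it to accommodate cuts from $d$ independent directions simultaneously. First I would parameterize the space of admissible labeled partitions. For each axis $i\in\{1,\dots,d\}$, an ordered choice of $t_i$ cut positions in $[0,1]$ together with an assignment of the $t_i+1$ resulting subintervals to the $q$ thieves is naturally encoded by a point of the $(t_i+1)$-fold join
\[
Y_i \;:=\; \underbrace{\field{Z}_q * \field{Z}_q * \dots * \field{Z}_q}_{t_i+1},
\]
a $t_i$-dimensional, $(t_i-1)$-connected simplicial complex carrying a free $\field{Z}_q$-action that cyclically permutes labels. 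A point of $Y_i$ records nonnegative lengths $x^i_0,\dots,x^i_{t_i}$ summing to $1$ together with a label $\ell^i_j\in\field{Z}_q$ at every index with $x^i_j>0$. Taking the product $Y := Y_1\times\dots\times Y_d$ parameterizes all axis-aligned cuboid partitions of $[0,1]^d$ with the prescribed numbers of cuts along each axis, and each of the $\prod_i(t_i+1)$ cuboids inherits the combined label $\ell^1_{j_1}+\dots+\ell^d_{j_d}\in\field{Z}_q$, making $Y$ a $\field{Z}_q$-space under the diagonal action.

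Next I would construct an equivariant test map. Let $V$ denote the reduced regular representation of $\field{Z}_q$, so that $\dim V=q-1$, and define
\[
f\colon Y \longrightarrow V^{\oplus k}
\]
whose $j$-th block records, for each $\ell\in\field{Z}_q$, the deviation of the $\mu_j$-mass of the union of cuboids labeled $\ell$ from the target value $1/q$. Continuity of the $\mu_j$ makes $f$ continuous, the construction is manifestly $\field{Z}_q$-equivariant, and a zero of $f$ is exactly a fair $q$-splitting of the desired form. The dimensions agree perfectly: $\dim Y = t_1+\dots+t_d = k(q-1) = \dim V^{\oplus k}$.

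The remaining task, and the main obstacle, is to show that no such equivariant map can avoid the origin. In the one-dimensional case $d=1$ this is Alon's application of Dold's theorem: $Y_1$ is $(k(q-1)-1)$-connected with free $\field{Z}_q$-action, whereas $V^{\oplus k}\setminus\{0\}$ is $\field{Z}_q$-equivariantly a sphere of strictly smaller dimension, so no equivariant map into it can exist. For $d\ge 2$, however, the product $Y$ is only $\bigl(\min_i t_i-1\bigr)$-connected, so connectivity alone no longer suffices. I would therefore invoke a product version of the equivariant Borsuk-Ulam theorem---Volovikov's theorem when $q$ is a prime power, and in the general case a Fadell-Husseini ideal-valued cohomological index computation in the spirit of de Longueville and \v{Z}ivaljevi\'{c}---showing that the $\field{Z}_q$-index of $Y$ strictly exceeds that of $V^{\oplus k}\setminus\{0\}$. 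This index estimate is the hardest step; once it is in place $f$ must vanish somewhere, producing the desired fair $q$-splitting, and composite $q$ can be handled either by the ideal-valued argument directly or by iteratively splitting each measure across the prime factors of $q$.
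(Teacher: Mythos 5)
This theorem is not proved in the paper under review: it is quoted verbatim from de Longueville and \v{Z}ivaljevi\'{c} \cite{LoZi08}, so your proposal can only be judged against the original source and on its own merits. Your setup (the product $Y=Y_1\times\dots\times Y_d$ of joins, the equivariant test map into $V^{\oplus k}$, the dimension count) is the natural first attempt, but the step you defer to the end --- showing that the $\mathbb{Z}_q$-index of $Y$ strictly exceeds that of $V^{\oplus k}\setminus\{0\}$ --- is not merely the hardest step: it is false. For the diagonal action each projection $Y\to Y_i$ is $\mathbb{Z}_q$-equivariant, so the index of the product is bounded by the index of \emph{each single factor} (for the Fadell--Husseini index, $\mathrm{Ind}(Y)\supseteq\mathrm{Ind}(Y_i)$); indices add along joins, not along products with the diagonal action. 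Since $Y_i=[\mathbb{Z}_q]^{*(t_i+1)}$ maps equivariantly into $S(V^{\oplus(t_i+1)})\subseteq V^{\oplus k}\setminus\{0\}$ whenever $t_i+1\le k$, composing with a projection produces an equivariant map $Y\to V^{\oplus k}\setminus\{0\}$ for essentially every balanced choice of the $t_i$ once $d\ge 2$. The smallest instance already exhibits this: for $d=2$, $q=2$, $k=2$, $t_1=t_2=1$, your $Y$ is the torus $S^1\times S^1$ with the diagonal antipodal action, the target deformation retracts equivariantly onto $S^1$ with the antipodal action, and projection onto the first circle is an equivariant map avoiding the origin. Hence no obstruction argument that uses only the equivariance of $f$ on this configuration space can force a zero, and the proof cannot be completed as outlined.

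This is precisely the difficulty that makes the multidimensional theorem nontrivial, and it is why \cite{LoZi08} does not argue on the bare product with the diagonal action: their proof uses a \emph{relative} (constrained) version of the configuration-space/test-map scheme, handling the axes inductively so that the topological input is applied to a restricted subspace of configurations rather than through a single index estimate for $Y$. To salvage your approach you would have to exploit properties of the specific map $f$ beyond equivariance --- for instance its behaviour on the faces of $Y$ where some length $x^i_j$ vanishes --- which is in effect what the relative scheme does. (A secondary, non-fatal remark: your labeling $\ell^1_{j_1}+\dots+\ell^d_{j_d}$ only realizes a special subfamily of assignments of cuboids to thieves; this is admissible for an existence proof but should be stated as a deliberate restriction.)
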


For the case of arbitrary continuous probability measures it is also easy to see that the total number of hyperplanes which suffice, that is $k(q-1)$, is best possible. Indeed, just divide the diagonal of the cube into $k$ intervals and consider one dimensional measures on them. However, this argument has still one dimensional nature.

In this paper we restrict our attention from continuous probability measures to measurable colorings, that is we are interested in splitting fully colored $d$-dimensional necklaces. A \emph{$k$-colored $d$-dimensional necklace} is a nontrivial axis-aligned cube in $\R^d$ partitioned into $k$ Lebesgue measurable sets, which we call colors. A \emph{fair $q$-splitting of size $t$} of a necklace is a division using $t$ axis-aligned hyperplane cuts into cuboids, which can be split into $q$ parts, each capturing exactly $1/q$ of the total measure of every color. 

A very interesting question concerning splitting multidimensional necklaces is to find the minimum number $t_{d,k,q}$ of cuts such that any $k$-colored $d$-dimensional necklace has a fair $q$-splitting using at most $t_{d,k,q}$ axis-aligned hyperplane cuts. Clearly, Alon's Theorem \ref{al} (or more generally Theorem \ref{lozi} of de Longueville and \v{Z}ivaljevi\'{c}) implies that $k(q-1)\geq t_{d,k,q}$. In Theorem \ref{TheoremNecklace} we actually prove that for $q=2$ this bound is tight. Together with Theorem \ref{lozi}  this fully characterizes the set of tuples $(t_1,\dots,t_d)\in\N^d$, such that any $k$-colored $d$-dimensional necklace has a fair $2$-splitting using at most $t_i$ hyperplanes aligned to $i$-th axis for each $i$. Namely, it is the set $$\{(t_1,\dots,t_d)\in\N^d:\;k\leq t_1+\dots+t_d\}.$$

However, the most interesting problem is a bit more difficult. We want to find a measurable coloring of the whole space $\R^d$, which avoids necklaces having a fair $q$-splitting of a bounded size $t$. Of course, we want to minimize the number of colors $k$ we use. This problem was already formulated and partially solved in \cite{AlGrLaMi09}. For $d=1,q=2$ a theorem of Goldberg and West \cite{GoWe85} (or more generally Alon's Theorem \ref{al}) implies that $k>t$ colors are needed, and we prove in \cite{AlGrLaMi09} that $k>t+2$ colors suffice.

\begin{theorem}[Alon, Grytczuk, Laso\'{n}, Micha\l ek, \cite{AlGrLaMi09}]\label{TheoremAGLM}
For every integers $k,t\geq 1$, if $k>t+2$, then there exists a measurable $k$-coloring of $\R$ such that no necklace has a fair $2$-splitting using at most $t$ cuts. 
\end{theorem}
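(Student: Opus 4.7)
For a measurable $k$-coloring $c$ of $\R$, write $\lambda_i(x):=\int_0^x\mathbf{1}_{c(y)=i}\,dy$ for the cumulative measure of color $i$. A fair $2$-splitting of an interval $[a,b]$ with cuts $a=c_0<c_1<\cdots<c_{t+1}=b$ and sign vector $\epsilon\in\{\pm1\}^{t+1}$ is equivalent to the system
$$E_i(c_0,\dots,c_{t+1},\epsilon):=\sum_{j=0}^{t}\epsilon_j\bigl(\lambda_i(c_{j+1})-\lambda_i(c_j)\bigr)=0, \qquad i=1,\dots,k.$$
Summing over $i$ and using $\sum_i\lambda_i(x)=x$ gives $\sum_i E_i=\sum_j\epsilon_j(c_{j+1}-c_j)$, which is a nontrivial linear relation in the $c_j$ rather than a tautology. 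So the system genuinely overdetermines the unknowns $c_0,\dots,c_{t+1}$ as soon as the number of equations $k$ exceeds the number of cut points $t+2$, which is precisely the hypothesis.

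\medskip

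\noindent\textbf{Baire framework.} I would parametrize admissible colorings by the compact Polish space $\mathcal{C}:=\prod_{n\in\Z}\Delta^{k-1}$, where a coordinate $(m_{n,1},\dots,m_{n,k})$ partitions $[n,n+1)$ into consecutive sub-intervals of those lengths colored by $1,2,\dots,k$. Each candidate threat $(a,b,c_1,\dots,c_t,\epsilon)$ has a \emph{combinatorial type} recording the integer parts of its $t+2$ cut points, their order inside any common unit interval, and $\epsilon$; there are only countably many such types. Letting $B_\tau\subset\mathcal{C}$ be the set of colorings admitting a fair splitting of type $\tau$, it suffices to prove each $B_\tau$ is closed with empty interior, because Baire then delivers a coloring in the residual complement $\mathcal{C}\setminus\bigcup_\tau B_\tau$ with the property claimed in the theorem.

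\medskip

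\noindent\textbf{Meagerness via algebraic independence.} On the stratum of a fixed type $\tau$, each $\lambda_i(c_j)$ is an affine function of the fractional positions $s_j:=c_j-\lfloor c_j\rfloor$ with coefficients polynomial in the finitely many parameters $(m_{n,i})$ that actually enter. The system $E_i=0$ reduces therefore to $k$ linear equations in the $t+2$ unknowns $s_j$, whose coefficient matrix and right-hand side are polynomials in these $(m_{n,i})$. I would select the $(m_{n,i})$ algebraically independent over $\Q$: because $k>t+2$, a transcendence-degree calculation shows that simultaneous vanishing of the $E_i$ at some admissible $(s_j)$ would force a nontrivial polynomial relation among the $(m_{n,i})$, contradicting their independence. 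Consequently the bad locus inside the finite-dimensional slice associated with $\tau$ lies in a proper real algebraic subvariety, so $B_\tau$ is nowhere dense in $\mathcal{C}$.

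\medskip

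\noindent\textbf{Main obstacle.} The delicate point is to ensure that, for every combinatorial type, the $k$ equations $E_i=0$ are genuinely independent modulo the single constraint $\sum_i\lambda_i=\mathrm{id}$, so that the count $k>t+2$ really produces overdetermination rather than collapsing through some hidden relation. Incidental degeneracies—cuts coinciding with color breakpoints, several cuts lying inside the same unit interval, or sign vectors that decouple the equations into smaller subsystems—must be handled by refining the stratification and then invoking the algebraic independence of the $(m_{n,i})$ to certify that each refined subsystem still has full rank.
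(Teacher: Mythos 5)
Your overall framework (a Baire-category argument over a parametrized family of colorings, stratification of the threats into countably many combinatorial types, and genericity enforced by algebraic independence) is the same in spirit as the paper's, but your choice of coloring space is fatally too small. Every point of $\mathcal{C}=\prod_{n}\Delta^{k-1}$ partitions each unit interval into finitely many monochromatic blocks, so it contains nontrivial monochromatic intervals; any such interval is a necklace admitting a fair $2$-splitting with a single cut at its midpoint. Concretely, for the type ``all of $a,c_1,\dots,c_t,b$ lie in one color block'' the equations $E_i$ for every other color $i$ vanish identically on the stratum, and the one surviving equation $\sum_j\epsilon_j(c_{j+1}-c_j)=0$ has plenty of nondegenerate solutions for every $m$ with that block nonempty. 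Hence $B_\tau$ contains a dense open set, it is not nowhere dense, and no point of $\mathcal{C}$ --- algebraically independent block lengths or not --- satisfies the theorem. This is why the paper works in the complete metric space $\mathcal{M}$ of \emph{all} measurable colorings: finite block (cube) colorings are dense there but every one of them is bad, and the good coloring is obtained nonconstructively as a point outside a first-category set, not by exhibiting an explicit finitely-described coloring. Relatedly, your types carry no quantitative nondegeneracy, whereas the paper's $n$-th bad set requires granularity at least $1/n$; this is needed both for closedness and to rule out collapsed cuts.

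Even granting a suitable space, the step you label the ``main obstacle'' is the entire content of the proof and is not carried out. The system $E_i=0$ is never generically unsolvable in the naive overdetermined-linear-algebra sense: it always admits the collapsed solutions $c_0=\cdots=c_{t+1}$, the coefficient of $s_j$ is $\epsilon_{j-1}-\epsilon_j$ and often vanishes, and for colors whose blocks meet no cut the equation degenerates to a relation that can hold identically in the $m$'s. The paper sidesteps this rank analysis entirely with a different device: it perturbs an arbitrary cube coloring at a scale tied to the granularity bound by planting, inside each cell, $k-1$ tiny cubes of the non-white colors whose side lengths $x_{j;i_1,\dots,i_d}$ are algebraically independent over $\Q$. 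Because each of the $q$ parts must contain an undivided cell, each balance equation can be solved for an isolated monomial $x_{j;i^l_1,\dots,i^l_d}^{d}$; this exhibits $(k-1)(q-1)$ numbers that are algebraically independent over the field $\K$ generated by the remaining markers, yet lie in $\K(T)$ where $T$ is the set of cut and endpoint coordinates, of transcendence degree at most $t+d+1-c$ (at most $t+1$ when $d=1$, using the linear independence of the equal-length equations). The resulting inequality $k-1\le t+1$ is what contradicts $k>t+2$. If you want to keep your linear-system viewpoint, you must either import this marker construction or genuinely prove, type by type, that all solutions are degenerate for generic data --- and your space of block colorings would still have to be replaced before either route can succeed.
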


Grytczuk and Lubawski \cite{GrLu12} generalized our result to higher dimensional spaces, by proving that the same holds in $\R^d$ (for $q=2$), provided  
$$k\geq\left(\frac{t}{d}+4\right)^d-\left(\frac{t}{d}+3\right)^d+\left(\frac{t}{d}+2\right)^d-2^d+t+2d+3.$$
They expected the lower bound on the number of colors is far from optimal, as even for $d=1$ it gives much worse result than the one from \cite{AlGrLaMi09}. Grytczuk and Lubawski asked \cite[Problem 9]{GrLu12} if $k\geq t+O(d)$ colors suffice for the case of two parts ($q=2$). In Section \ref{SectionAxisAlignedHyperplaneCuts} we prove that the answer is yes.

\begin{theoremm}\emph{\ref{TheoremNecklaces}}
For every integers $d,k,t\geq 1$ and $q\geq 2$, if $k(q-1)>t+d+q-1$, then there exists a measurable $k$-coloring of $\R^d$ such that no $d$-dimensional necklace has a fair $q$-splitting using at most $t$ axis-aligned hyperplane cuts. The set of such colorings is dense. For $d=1$ the same holds provided $k(q-1)>t+2$. 
\end{theoremm}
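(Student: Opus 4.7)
The plan is a Baire category argument on the complete metric space $\mathcal{M}_k$ of Lebesgue measurable $k$-partitions $(A_1,\dots,A_k)$ of $\R^d$ (identified up to null modifications), equipped with the metric
\[
\rho(\chi,\chi')\;=\;\sum_{n\geq 1}2^{-n}\sum_{i=1}^{k}\min\!\bigl(1,\,\lambda\bigl((A_i\triangle A_i')\cap[-n,n]^d\bigr)\bigr).
\]
Let $B\subset\mathcal{M}_k$ be the set of colorings admitting a fair $q$-splitting of some axis-aligned cube using at most $t$ axis-aligned hyperplane cuts. The aim is to show $B$ is meager; its complement is then comeager in the complete metric space $\mathcal{M}_k$, providing both the existence of the desired coloring and density of such colorings.

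First I would decompose $B$ into countably many closed sets. A prospective fair splitting is specified by finitely many discrete parameters---the composition $t=t_1+\dots+t_d$ of cuts among the axes together with the assignment function from the $\prod_i(t_i+1)$ resulting cuboids to the $q$ parts---and by $t+d+1$ continuous parameters: the cube's lower corner, its side length, and the coordinates of the $t$ cuts. Covering the continuous parameter space by countably many compact regions (bounded corner in $\Z^d$-translated unit cubes, side length in an interval $[2^m,2^{m+1}]$) and enumerating the discrete data, write $B=\bigcup_\alpha B_\alpha$. Each $B_\alpha$ is closed: if $\chi_n\to\chi$ in $\mathcal{M}_k$ and each $\chi_n$ admits a splitting of type $\alpha$ with parameters in the compact region, extract a convergent subsequence of parameter vectors and pass to the limit, using continuity of color measures in both arguments.

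The heart of the argument is showing each $B_\alpha$ is nowhere dense. Given $\chi_0\in\mathcal{M}_k$ and $\epsilon>0$, the plan is to perturb $\chi_0$ by recoloring many disjoint small axis-aligned cubes $Q_1,\dots,Q_N$ placed inside the cube-region of $B_\alpha$, with side lengths $s_1,\dots,s_N$ chosen algebraically independent over $\Q$ and total volume below $\epsilon$. Suppose the perturbed $\chi'$ still admitted a fair splitting with cube $C$ and cut coordinates $p_1,\dots,p_t$. Then each of the $k(q-1)$ identities ``measure of color $i$ in part $l$ equals $m_i/q$'' becomes a polynomial identity in the $s_j^d$ with coefficients in the field $\K=\Q(C,p_1,\dots,p_t)$, which has transcendence degree at most $t+d+1$ over $\Q$. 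The $s_j$ remaining algebraically independent over $\K$ force the coefficients of these identities to vanish; choosing the recoloring of the $Q_j$ to violate the resulting combinatorial conditions should produce the desired contradiction.

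The main obstacle is locating precisely the threshold $k(q-1)>t+d+q-1$. The naive parameter-versus-constraint count yields only $k(q-1)>t+d+1$; the extra $q-2$ reflects that when $q\geq 3$ the $q$-part assignment function carries additional freedom to absorb a recoloring of some $Q_j$ among the parts with matching deficit. Extracting exactly this loss requires a careful combinatorial analysis of how the $s_j^d$ coefficients enter the $k(q-1)$ fair-splitting equations through the fibres of the assignment function, and in dimension $d=1$ this slack can be avoided by a sharper direct argument that tracks, for each cut position $p_i$, which $Q_j$ it falls into---recovering the bound $k(q-1)>t+2$ announced in the last clause.
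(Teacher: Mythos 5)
Your overall framework---Baire category on the complete metric space of measurable $k$-colorings, a countable decomposition of the bad set into closed pieces, and a perturbation by small cubes with algebraically independent side lengths---is exactly the paper's approach. But the core algebraic step, as you state it, has a genuine gap. You want the $s_j$ to ``remain algebraically independent over $\K=\Q(C,p_1,\dots,p_t)$.'' This is circular: the cube $C$ and the cut coordinates $p_i$ are chosen adversarially \emph{after} the perturbation and may depend algebraically on the $s_j$ (a cut can pass through a face of some $Q_j$, putting $s_j$ itself into $\K$). The paper never claims such independence. Instead it runs the comparison in the opposite direction: the left-hand sides of the fair-splitting equations lie in $\K(T)$ where $T=\{\alpha_i,\beta_m\}$ is the set of cube and cut parameters and $\K$ is generated by \emph{all but a few designated} side lengths, so $\trdeg_\K(L)\leq\trdeg_\Q(T)\leq t+d$; while the right-hand sides are arranged to be exactly the monomials $x^d_{j;i^l_1,\dots,i^l_d}$ of the designated (excluded) variables, giving $\trdeg_\K(R)=(k-1)(q-1)$. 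Equality of the two sets of real numbers then yields $(k-1)(q-1)\leq t+d$, the contradiction. Making the right-hand sides isolate fresh transcendentals in this way requires that \emph{each of the $q$ parts contains a whole perturbation cell untouched by any cut}; the paper secures this by building a granularity lower bound $1/n$ into the definition of $\mathcal{B}_n$ and taking the monochromatic grid fine enough. Your decomposition into $B_\alpha$ has no granularity condition, so this step is not available to you, and your fallback (``force the coefficients to vanish \dots violate the resulting combinatorial conditions'') is not an argument.

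Your accounting of the threshold is also not the paper's, and I do not see how to make your heuristic precise. The loss relative to the naive count $k(q-1)>t+d+1$ is not about freedom in the part-assignment function: it is $q-1$ lost because only $k-1$ colors enter the independent right-hand sides (one color is sacrificed as the ``white'' background filling the perturbation cells), partially offset by $+1$ because the $q-1$ equal-volume relations show $T$ is algebraically dependent over $\Q$, so $\trdeg_\Q(T)\leq t+d$ rather than $t+d+1$. Net: $(k-1)(q-1)>t+d$, i.e.\ $k(q-1)>t+d+q-1$. For $d=1$ the volume relations are linear and linearly independent, so the offset improves to $+(q-1)$ and one gets $k(q-1)>t+2$; your proposed ``sharper direct argument tracking which $Q_j$ each cut falls into'' is not what is needed there.
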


Notice that our bound is of the same order as a necessary condition $k(q-1)>t$, which is a consequence of Alon's Theorem \ref{al} (or by a more general Theorem \ref{lozi} of de Longueville and \v{Z}ivaljevi\'{c}). For $d=1,q=2$ Theorem \ref{TheoremNecklaces} coincides with the result of \cite{AlGrLaMi09}. We conjecture that for $q=2$ our bound is tight, as it captures the idea of degrees of freedom of objects we want to split.

In Section \ref{SectionArbitraryHyperplaneCuts} we generalize our results to the situation when arbitrary hyperplane cuts are allowed. This is closely related to the mass equipartitioning problem: \smallskip

\emph{Does for every $k$ continuous mass distributions $\mu_1,\dots,\mu_k$ in $\R^d$ there exist $t$ hyperplanes dividing the space into pieces which can be split into $q$ parts of equal measure $\mu_i$ for each $i$?}\smallskip

A necessary condition for the positive answer is $k(q-1)\leq td$. It can be shown by considering one dimensional measure along the moment curve. The most basic and well-known positive result in this area is the Ham Sandwich Theorem for measures, asserting that for $(d,k,q,t)=(d,d,2,1)$ the answer is yes. It is a consequence of the Borsuk-Ulam theorem (see \cite{Ma03}), and it also has discrete versions concerning equipartitions of a set of points in the space by a hyperplane. Several other things are known (see \cite{BlZi07,Ed87,MaVrZi06,Ma10,Ra96}), however even for some small values (eg. $(4,1,16,4)$) the question remains open. In the problem we are studying, $k$ continuous mass distributions in $\R^d$ are restricted to a measurable $k$-coloring of a given necklace. In this variant in Theorem \ref{TheoremNecklaceArbitrary} we reprove the necessary inequality for $q=2$. 

In the same spirit as in the case of axis-aligned hyperplane cuts we investigate existence of measurable colorings of $\R^d$ avoiding necklaces having a fair $q$-splitting of a bounded size $t$. 

\begin{theoremm}\emph{\ref{TheoremNecklacesArbitrary}}
For every integers $d,k,t\geq 1$ and $q\geq 2$, if $k(q-1)>dt+d+q-1$, then there is a measurable $k$-coloring of $\R^d$ such that no $d$-dimensional necklace has a fair $q$-splitting using at most $t$ arbitrary hyperplane cuts. The set of such colorings is dense. 
\end{theoremm}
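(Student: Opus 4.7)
The proof should follow the Baire-category scheme behind Theorem~\ref{TheoremNecklaces}, modified only by the parameter count for the hyperplane cuts. I would fix a complete metric space $\mathcal{C}$ of measurable $k$-colorings of $\R^d$ (for instance, bounded step-function colorings on a nested exhaustion of cubes, topologised so that Baire's theorem applies), and exhibit a dense $G_\delta$ subset consisting of colorings that do not admit any fair $q$-splitting of any necklace with at most $t$ arbitrary hyperplane cuts. This $G_\delta$ is realised as the intersection, over the countable family of combinatorial types $\tau$ of hyperplane arrangement together with an assignment of cells to the $q$ parts, of the open dense complements of the ``bad'' sets
\[
B_\tau \;=\; \{c \in \mathcal{C} : \text{some configuration of type } \tau \text{ is a fair $q$-splitting for } c\}.
\]

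For a fixed type $\tau$ the configuration space of (cube, $t$ hyperplanes) has dimension exactly $dt + d + 1$: the cube contributes $d + 1$ parameters (a corner plus a side length) and each arbitrary hyperplane in $\R^d$ contributes $d$ parameters. A fair $q$-splitting imposes $k(q-1)$ independent measure equalities (each of $k$ colors must be evenly distributed among the $q$ parts, giving $q - 1$ independent equations per color). The hypothesis $k(q-1) > dt + d + q - 1$ says that, even after adding the $q - 1$ auxiliary parameters coming from the target values $\tfrac{1}{q}\,\mu_j(\text{color }j)$ that enter the splitting equations once the coloring $c$ is allowed to vary, the number of algebraic conditions still strictly exceeds the number of continuous free parameters.

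Mimicking the construction of Theorem~\ref{TheoremAGLM} and of its axis-aligned $d$-dimensional analogue, I would build, for any reference coloring $c_0 \in \mathcal{C}$ and any $\varepsilon > 0$, a coloring $c$ with $d(c,c_0) < \varepsilon$ whose defining boundary coordinates form an algebraically independent set over $\Q$. For any putative configuration $(C,H_1,\ldots,H_t)$ with parameters lying in an extension $\K$ of $\Q$ of transcendence degree at most $dt+d+1$, the $k(q-1)$ fair-splitting equations become polynomial relations over $\K$ among the algebraically independent boundary coordinates of $c$; a transcendence-degree count against $k(q-1) > dt + d + q - 1$ rules all such relations out. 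Density of these $c$ in $\mathcal{C}$, together with the fact that $B_\tau$ is closed (fairness of a splitting persists under $L^1$-limits of colorings with a fixed configuration), makes each $\mathcal{C}\setminus B_\tau$ open and dense, and the Baire category theorem finishes the proof.

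The main obstacle I expect is implementing the transcendence-degree perturbation \emph{uniformly} across all combinatorial types $\tau$, especially on degenerate strata of the configuration space where hyperplanes coincide, meet in low-dimensional loci, or cut cells of vanishing measure. Such degenerations can produce ``coincidental'' fair splittings that are invisible to the generic count and must be removed by stratifying the parameter space by combinatorial type and verifying the codimension-one claim separately on each stratum, propagating the algebraic-independence argument along the closure. The $q-1$ buffer in the hypothesis (rather than a single $+1$ as in the $q=2$ case of \cite{AlGrLaMi09}) is precisely what is needed to absorb these strata; whether this buffer is sharp or merely an artefact of the method seems to be the same issue as the conjectural tightness mentioned after Theorem~\ref{TheoremNecklaces}.
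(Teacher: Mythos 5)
Your overall scheme — Baire category on the complete metric space of measurable colorings, approximation by cube colorings perturbed to have algebraically independent data, and a transcendence-degree count against the configuration parameters — is exactly the paper's. But the central step, as you state it, has a genuine gap. You write that the fair-splitting equations ``become polynomial relations over $\K$ among the algebraically independent boundary coordinates of $c$'' and that a transcendence-degree count ``rules all such relations out.'' That inference is not valid on its own: elements algebraically independent over $\Q$ can perfectly well satisfy relations over an extension field $\K$ of positive transcendence degree (take $\K$ to contain one of them). The paper's argument works only because of a specific separation of variables: the granularity condition built into the definition of $\mathcal{B}_n$ forces each of the $q$ parts to contain an entire \emph{unsplit} small cell $C_{i^l_1,\dots,i^l_d}$, so the side length $x_{j;i^l_1,\dots,i^l_d}$ of the color-$j$ square inside it enters the measure of part $l$ only as the pure monomial $x^d_{j;i^l_1,\dots,i^l_d}$. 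Moving everything else to the other side yields equations $(\clubsuit)$ whose right-hand sides form a set of $(k-1)(q-1)$ elements algebraically independent over the field $\K$ generated by all the \emph{remaining} $x$'s, while the left-hand sides lie in $\K(T)$ with $T=\{\alpha_i,\beta^p_m\}$ of transcendence degree at most $dt+d+1-1$ over $\K$ (the $-1$ coming from the equality of volumes of the parts). Without isolating these pure powers — and without designating a ``white'' color whose equations are sacrificed — the count does not close. This also corrects your bookkeeping: the usable equations number $(k-1)(q-1)$, not $k(q-1)$, and that, not ``auxiliary target-value parameters'' or degenerate strata, is where the $q-1$ in the hypothesis comes from. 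Finally, for \emph{arbitrary} hyperplanes the measures of the pieces are rational functions of the parameters (Cramer's rule plus barycentric subdivision, Lemma~\ref{rat} of the paper), not polynomials; this is harmless for the transcendence count but must be said.

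A second, more minor gap is in your decomposition of the bad set. Indexing by ``combinatorial type $\tau$'' alone does not make $B_\tau$ closed: a sequence of fair configurations of fixed type can degenerate (cells of vanishing width, cubes shrinking to points) or escape to infinity, so the limit need not witness membership in $B_\tau$. The paper instead truncates by $n$: necklaces inside $[-n,n]^d$ and splittings of granularity at least $1/n$, which gives compactness of the configuration space and hence closedness of $\mathcal{B}_n$ by extracting convergent subsequences of cubes, hyperplanes, and (finitely many) part-labelings. The same granularity parameter is then what guarantees the unsplit cells needed in the algebraic step, so the two fixes are one and the same device; your stratification-by-type worry is dissolved rather than solved by it.
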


In Section \ref{SectionColoringsDistinguishingCubes} we generalize another result from \cite{AlGrLaMi09} of a slightly different flavour. In \cite{AlGrLaMi09} we proved that there is a measurable $5$-coloring of the real line distinguishing intervals, that is no two intervals contain the same measure of every color (this solves a problem from \cite{GrSl03}). In Theorem \ref{2d+3} we generalize it by proving that there is a measurable $(2d+3)$-coloring of $\R^d$ distinguishing axis-aligned cubes. We suspect that this number is optimal. Recently Vre\'{c}ica and \v{Z}ivaljevi\'{c} \cite{VrZi13} solved the case of arbitrary continuous probability measures. They showed $d+1$ measures distinguishing cubes, and proved that for any $d$ continuous probability measures on $\R^d$ there are two (in fact any finite number) non-trivial axis-aligned cubes which are not distinguished.

The proofs of all already mentioned results are based on the topological Baire category theorem applied to the space of all measurable colorings of $\R^d$, equipped with a suitable metric. Section \ref{SectionTheSetting} describes this background. The remaining part of the argument is algebraic, it uses algebraic independence over suitably chosen fields. This algebraic independence describes well the idea of degrees of freedom of considered geometric objects. We will demonstrate in full details a proof of our main result -- Theorem \ref{TheoremNecklaces}. Proofs of other results fit the same pattern, so we will omit parts which can be rewritten with just minor changes and describe only main differences. 

We end the paper with a brief look at a discrete variant of the multidimensional necklace splitting problem, as introduced in \cite{GrLu12}. It turns out that the equivalence between discrete and continuous case does not hold in dimensions higher than one. In general it is more difficult to split a discrete necklace, one needs roughly $d$ times more axis-aligned hyperplane cuts. In Section \ref{SectionDiscreteCase} we give bounds on the minimum number of axis-aligned hyperplane cuts $t$ that are needed for a fair $q$-splitting of a $k$-colored discrete $d$-dimensional necklace (a cube in $\N^d$). Namely we prove that $d(k-1)\frac{q}{2}\leq t\leq (2d-1)k(q-1)$.

\section{The Topological Setting}\label{SectionTheSetting}

Recall that a set in a metric space is \emph{nowhere dense} if the interior of its closure is empty. Sets which can be represented as a countable union of nowhere dense sets are said to be of \emph{first category}.

\begin{theorem}[Baire, cf. \cite{Ox80}]
If $X$ is a complete metric space and a set $A\subset X$ is of first category, then $X\setminus A$ is dense in $X$ (in particular is non-empty).
\end{theorem}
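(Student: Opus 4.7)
The plan is to prove the contrapositive formulation directly: given any nonempty open set $U\subset X$ and a countable family of nowhere dense sets $A_1,A_2,\dots$ whose union is $A$, I will produce a point of $U$ that avoids every $A_n$. The key tool will be completeness together with the nowhere-density hypothesis, exploited through a standard nested-balls construction.

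First I would construct recursively a sequence of open balls $B_n=B(x_n,r_n)$ with the properties (a) $\overline{B_{n+1}}\subset B_n$, (b) $\overline{B_n}\cap A_n=\varnothing$, (c) $r_n<1/n$, and (d) $\overline{B_1}\subset U$. The base step uses that $A_1$ is nowhere dense: its closure has empty interior, so $U\setminus\overline{A_1}$ is nonempty and open, and I can pick a small closed ball inside. The inductive step is identical: since $\overline{A_{n+1}}$ has empty interior, the open set $B_n\setminus\overline{A_{n+1}}$ is nonempty, and I choose $B_{n+1}$ to be a small ball there, with radius less than $1/(n+1)$ and with closure inside $B_n$.

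Now the sequence of centers $(x_n)$ is Cauchy because for $m,n\ge N$ both $x_m,x_n$ lie in $B_N$, which has diameter less than $2/N$. By completeness of $X$, the sequence converges to some limit $x$. Since $x_m\in \overline{B_{n+1}}\subset B_n$ for all $m\ge n+1$, the limit $x$ lies in $\overline{B_n}$ for every $n$. By (b) this means $x\notin A_n$ for every $n$, hence $x\notin A$, while by (d) we have $x\in \overline{B_1}\subset U$. Thus $U\cap(X\setminus A)\neq\varnothing$, and since $U$ was arbitrary, $X\setminus A$ is dense.

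I do not expect any real obstacle here; the only subtle point is ensuring at each step that one can genuinely choose $B_{n+1}$ with its \emph{closure} inside $B_n$ and disjoint from $A_{n+1}$, which is handled by first picking a point in the open set $B_n\setminus\overline{A_{n+1}}$ and then shrinking the radius to be smaller than both $1/(n+1)$ and half the distance to $\partial B_n\cup\overline{A_{n+1}}$. Everything else is the standard Cauchy-sequence argument in a complete metric space.
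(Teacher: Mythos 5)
Your proof is correct: it is the classical nested-closed-balls argument for the Baire category theorem, and the Cauchy-sequence step and the care taken to keep $\overline{B_{n+1}}\subset B_n$ are exactly what is needed. The paper does not prove this statement at all — it is quoted as a classical result with a reference to Oxtoby — and your argument is precisely the standard one found there, so there is nothing to compare beyond noting that you have supplied the omitted textbook proof.
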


Almost all of our results (except ones from the last Section) assert existence of certain `good' colorings. The main framework of the argument will mimic the one from \cite{AlGrLaMi09}. Our plan is to construct a suitable metric space of colorings and prove that the subset of `bad' colorings is of first category, resulting that `good' colorings exist. To prove it we will use entirely different technique than in \cite{AlGrLaMi09}. Our approach will be purely algebraic, we will compare transcendence degrees of some sets of numbers over suitably chosen fields. We go towards a proof of our main result -- Theorem \ref{TheoremNecklaces}, which asserts existence of a measurable $k$-coloring of $\R^d$ with no necklace having a fair $q$-splitting of size at most $t$, provided $k(q-1)>t+d+q-1$. 

Let $k$ be a fixed positive integer, and let $\{1,\dots,k\}$ be the set of colors. Let $M$ be a space of all measurable $k$-colorings of $\R^d$, that is the set of measurable maps $f:\R^d\rightarrow\{1,\dots,k\}$ (for each $i$ the set $f^{-1}(i)$ is Lebesgue measurable). For $f,g\in M$, and a positive integer $n$ let 
$$D_n(f,g)=\{x\in[-n,n]^d: f(x)\neq g(x)\}.$$
Clearly $D_n(f, g)$ is a measurable set, and we may define the normalized distance between $f$ and $g$ on $[-n,n]^d$ by 
$$d_n(f,g)=\frac{\mathcal{L}(D_n(f,g))}{(2n)^d},$$
where $\mathcal{L}(D)$ denotes the Lebesgue measure of a set $D$. We define the distance between any two colorings $f,g$ from $M$ by
$$d(f,g)=\sum_{n=1}^{\infty}\frac{d_n(f,g)}{2^n}.$$ 
To get a metric space we need to identify colorings whose distance is zero (they differ only on a set of measure zero), but they are indistinguishable from our point of view. Formally $\mathcal{M}$ is metric space consisting of equivalence classes of $M$ with distance function $d$. 

\begin{lemma}
Metric space $\mathcal{M}$ is complete.
\end{lemma}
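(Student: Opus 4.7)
The plan is the standard completeness argument for the metric of convergence in measure, adapted to the $\sigma$-finite setting $\R^d$ via the weighting $\sum 2^{-n}$. Because the values lie in the discrete set $\{1,\dots,k\}$, almost-everywhere convergence here just means eventual equality at each point, which makes the construction of a limit transparent.

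First, from $d_n(f,g)\le 2^n d(f,g)$ one sees that any $d$-Cauchy sequence $(f_m)\subset\mathcal{M}$ is $d_n$-Cauchy for every $n$; equivalently, $(f_m)$ is Cauchy in measure on $[-n,n]^d$ as a sequence of $\{1,\dots,k\}$-valued functions. By the classical theorem that convergence in measure on a finite-measure set admits an a.e.\ convergent subsequence, I can extract for each $n$ a subsequence converging pointwise a.e.\ on $[-n,n]^d$. A diagonal extraction then produces a single subsequence $(f_{m_j})$ converging a.e.\ on all of $\R^d$. Since the target set is discrete, convergence of $f_{m_j}(x)$ forces $f_{m_j}(x)$ to be eventually constant. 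Setting
\[
f(x):=\lim_{j\to\infty} f_{m_j}(x)
\]
on the full-measure set where the limit exists, and $f(x):=1$ otherwise, yields a measurable map $\R^d\to\{1,\dots,k\}$, since $\{f=i\}$ agrees up to a null set with $\bigcup_{J\ge 1}\bigcap_{j\ge J}\{f_{m_j}=i\}$.

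Second, I would upgrade subsequential convergence to convergence of the full sequence using the Cauchy condition. For fixed $n$ and fixed $m$, the indicators $\mathbf{1}_{D_n(f_m,f_{m_j})}$ converge a.e.\ to $\mathbf{1}_{D_n(f_m,f)}$ as $j\to\infty$, because $f_m(x)\neq f(x)$ is equivalent to $f_m(x)\neq f_{m_j}(x)$ for all but finitely many $j$. Dominated convergence on $[-n,n]^d$ then gives $d_n(f_m,f)=\lim_j d_n(f_m,f_{m_j})$. Given $\varepsilon>0$ and $n$ fixed, the Cauchy hypothesis provides $N$ with $d_n(f_m,f_l)<\varepsilon$ for all $m,l\ge N$; passing to the limit along $j$ gives $d_n(f_m,f)\le\varepsilon$ for $m\ge N$. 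Splitting $d(f_m,f)=\sum_n 2^{-n}d_n(f_m,f)$ into a finite head (which goes to $0$ by the above) and a tail $\sum_{n>N_0}2^{-n}$ (which is arbitrarily small since $d_n\le 1$) concludes $d(f_m,f)\to 0$.

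There is no real conceptual obstacle; the argument is the standard proof of completeness for $L^0$-type metrics. The only care required is keeping the two limits straight: first along the a.e.\ convergent subsequence to construct $f$, then along the full sequence using the Cauchy property to recover its convergence. The discreteness of $\{1,\dots,k\}$ is what sidesteps the technicalities of real-valued convergence in measure and makes the measurability of the limit immediate.
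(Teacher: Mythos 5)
Your argument is correct and is exactly the standard completeness proof (Riesz: Cauchy in measure $\Rightarrow$ a.e.\ convergent subsequence, then diagonalize over the cubes $[-n,n]^d$ and upgrade via the Cauchy condition) that the paper invokes by citation rather than writing out, noting only that $\mathcal{M}$ is a straightforward generalization of the complete metric space of finite-measure sets with the symmetric-difference metric. Your observation that discreteness of $\{1,\dots,k\}$ turns a.e.\ convergence into eventual pointwise equality is a clean way to handle measurability of the limit, but the route is the same one the paper has in mind.
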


This is a straightforward generalization of the fact that sets of finite measure in any measure space form a complete metric space, with measure of symmetric difference as the distance function (see \cite{Ox80}).

A coloring $f\in\mathcal{M}$ is called a \emph{cube coloring on $[-n,n]^d$} if there exists $N$ such that $f$ is constant on any half open cube from the division of $[-n,n]^d$ into $N^d$ equal size cubes. Let $\mathcal{I}_n$ denote the set of all colorings from $\mathcal{M}$ that are cube colorings on $[-n,n]^d$.

\begin{lemma}\label{aprox} Let $f\in\mathcal{M}$, then for every $\epsilon>0$ and an integer $n$ there exists a cube coloring $g\in\mathcal{I}_n$ such that $d(f,g)<\epsilon$.
\end{lemma}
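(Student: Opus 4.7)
The plan is to reduce the metric approximation of $f$ to a pure $L^1$-approximation on the bounded set $[-n,n]^d$, and then produce such an approximation by a ``majority color on a fine cubical grid'' construction.

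First, I would insist that the approximating coloring satisfy $g(x) = f(x)$ for $x \notin [-n,n]^d$, so that the disagreement set $E := \{x : f(x) \ne g(x)\}$ is contained in $[-n,n]^d$; this keeps $g$ measurable. Then every $D_m(f,g)$ is a subset of $E$ and hence has measure at most $\mathcal{L}(E)$, giving
$$d(f,g) = \sum_{m=1}^\infty \frac{d_m(f,g)}{2^m} \le \sum_{m=1}^\infty \frac{\mathcal{L}(E)}{(2m)^d\, 2^m} \le \mathcal{L}(E).$$
Thus it suffices to construct $g \in \mathcal{I}_n$ with $\mathcal{L}(E) < \epsilon$.

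For the construction, partition $[-n,n]^d$ into $N^d$ congruent half-open cubes $C_1,\dots,C_{N^d}$ of side $2n/N$ and define $g_N$ on each $C_j$ to be the constant color $i_j$ maximizing $\mathcal{L}(f^{-1}(i) \cap C_j)$, breaking ties arbitrarily; outside $[-n,n]^d$ put $g_N = f$. This is a cube coloring on $[-n,n]^d$, so $g_N \in \mathcal{I}_n$. It remains to show $\mathcal{L}(\{g_N \ne f\} \cap [-n,n]^d) \to 0$ as $N \to \infty$. Here Lebesgue's density theorem enters: for each color $i$, almost every $x \in A_i := f^{-1}(i)$ is a density point of $A_i$. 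The cube $C_j(x)$ of side $2n/N$ containing $x$ lies in a Euclidean ball of radius $\sqrt{d}\,(2n/N)$ centered at $x$ whose volume is a fixed multiple of $\mathcal{L}(C_j(x))$, so densities at $x$ computed over the shrinking cubes $C_j(x)$ coincide with the usual densities over shrinking balls. Consequently the density of $A_i$ in $C_j(x)$ tends to $1$ while the densities of $A_{i'}$ for $i' \ne i$ tend to $0$; for all sufficiently large $N$, color $i$ is the strict majority on $C_j(x)$, whence $g_N(x) = i = f(x)$. Dominated convergence on the bounded set $[-n,n]^d$ then gives $\mathcal{L}(\{g_N \ne f\}) \to 0$, so any sufficiently large $N$ works.

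The main, rather mild, technical point is that Lebesgue differentiation has to be applied to the non-centered cubes $C_j(x)$ rather than to centered balls; this is handled by the uniform bound on the cube-versus-ball volume ratio recorded above. Everything else is a routine application of standard real-analysis tools.
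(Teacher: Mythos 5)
Your proof is correct, but it takes a genuinely different route from the paper's. The paper first approximates each color class $f^{-1}(i)\cap[-n,n]^d$ by a finite union of cuboids (regularity of Lebesgue measure), builds an intermediate coloring $h$ whose color classes are finite unions of cuboids, and then passes to a fine grid, observing that only the $O(N^{d-1})$ grid cubes meeting the boundaries of those cuboids can be miscolored; this makes the error quantitatively explicit without ever invoking differentiation of measures. You instead go directly to the grid and assign each small cube its majority color, using the Lebesgue density theorem (in its version for regularly shrinking, non-centered cubes, which you correctly justify via the uniform cube-to-ball volume ratio) plus dominated convergence to show the disagreement set shrinks to zero. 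Your preliminary reduction $d(f,g)\le\mathcal{L}(E)$ for $E\subset[-n,n]^d$ is also sound, since $\sum_{m\ge1}\bigl((2m)^d2^m\bigr)^{-1}\le 1$. What each approach buys: yours is shorter and conceptually cleaner (one approximation step instead of two, no bookkeeping of boundary cubes), at the cost of invoking the density theorem and yielding no explicit rate in $N$; the paper's is more elementary in its analytic input and gives a concrete choice $N>4dt(2n)^d/\epsilon$. Both establish the lemma.
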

\begin{proof} Let $C_i=f^{-1}(i)\cap[-n,n]^d$ and let $C_i^*\subset [-n, n]^d$ be a finite union of cuboids such that
$$\mathcal{L}((C_i\setminus C_i^*)\cup(C_i^*\setminus C_i))<\frac{\epsilon}{2k^2}$$
for each $i=1,\dots,k$. 
Define a coloring $h$ so that for each $i=1,\dots,k$, the set $C_i^*\setminus(C_1^*\cup\dots\cup C_{i-1}^*)$ is filled with color $i$, the remaining part of the cube $[-n,n]^d$ is filled with any fixed color, and $h$ agrees with $f$ everywhere outside $[-n,n]^d$. Then $d(f,h)<\epsilon/2$ and clearly each $h^{-1}(i)\cap[-n, n]^d$ is a finite union of cuboids. Let $A_1,\dots,A_t$ be the whole family of these cuboids. Now divide the cube $[-n, n]^d$ into $N^d$ equal cubes $B_1,\dots,B_{N^d}$ for $N>4dt(2n)^d/\epsilon$. Define a new coloring $g$ such that $g\vert_{B_i}\equiv h\vert_{B_i}$ whenever $B_i\subset A_j$ for some $j$, and $g\vert_{B_i}$ is constant equal to any color otherwise. Clearly $g$ is a cube coloring on $[-n,n]^d$. Moreover $g$ differs from $h$ on at most $2dt(N)^{d-1}$ small cubes of total volume at most $2dt(2n)^d/N<\epsilon/2$, thus we get $d(f,g)<\epsilon$.	
\end{proof}

\section{Axis-aligned Hyperplane Cuts}\label{SectionAxisAlignedHyperplaneCuts}

A splitting of a necklace $C=[\alpha_1,\alpha_1+\alpha_0]\times\dots\times[\alpha_d,\alpha_d+\alpha_0]$ using $t$ axis-aligned hyperplane cuts can be described by a partition $t_1+\dots+t_d=t$ such that for each $i$ there are exactly $t_i$ hyperplanes aligned to $i$-th axis, by $i$-th coordinates of this hyperplanes $\alpha_i=\beta^i_0\leq\beta^i_1\leq\dots\leq\beta^i_{t_i}\leq\beta^i_{t_i+1}=\alpha_i+\alpha_0$, and by a $q$-labeling of obtained pieces. By granularity of the splitting (or more generally of a division) we mean the length of the shortest subinterval $[\beta^i_j,\beta^i_{j+1}]$ over all $i,j$. 

For $(t_1,\dots,t_d)$ and $n$ we denote by $\mathcal{B}_n^{(t_i)}$ the set of $k$-colorings from $\mathcal{M}$ for which there exists at least one $d$-dimensional necklace contained in $[-n, n]^d$ which has a fair $q$-splitting using for each $i$ exactly $t_i$ hyperplanes aligned to $i$-th axis and with granularity at least $1/n$. Finally let $$\mathcal{B}_n=\bigcup_{t_1+\dots+t_d\leq t}\mathcal{B}_n^{(t_i)}$$
be a set of `bad' colorings. 

Let $\mathcal{G}_t$ be a subset of $\mathcal{M}$ consisting of those
$k$-colorings for which any necklace does not have a fair $q$-splitting using at most $t$ axis-aligned hyperplane cuts. Clearly we have
$$\mathcal{G}_t=\mathcal{M}\setminus\bigcup_{n=1}^{\infty}\mathcal{B}_n.$$

Our aim is to show that sets $\mathcal{B}_n$ are nowhere dense, provided suitable relation between $d,k,q$ and $t$ holds. This will imply that the union $\bigcup_{n=1}^{\infty}\mathcal{B}_n$ is of first category, so the set of `good' colorings $\mathcal{G}_t$, which we are looking for, is dense.

\begin{lemma}\label{LemmaClosed} Sets $\mathcal{B}_n$ are closed in $\mathcal{M}$.
\end{lemma}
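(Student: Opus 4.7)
The plan is to reduce closedness of $\mathcal{B}_n$ to closedness of each individual $\mathcal{B}_n^{(t_1,\dots,t_d)}$, since $\mathcal{B}_n$ is a finite union over partitions $t_1+\dots+t_d\leq t$. Fix such a partition and take a sequence $f_m\to f$ in $\mathcal{M}$ with every $f_m\in\mathcal{B}_n^{(t_1,\dots,t_d)}$. Each $f_m$ carries witnessing data: a necklace $C_m\subseteq[-n,n]^d$ described by parameters $(\alpha_0^{(m)},\alpha_1^{(m)},\dots,\alpha_d^{(m)})$, cut coordinates $\beta^{i,(m)}_j$ with $1\leq i\leq d$, $1\leq j\leq t_i$, and a $q$-labeling $\lambda_m$ of the resulting $\prod_{i}(t_i+1)$ cuboid pieces.

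First I would exploit compactness. All continuous parameters lie in $[-n,n]$, and the labeling takes only finitely many values, so after passing to a subsequence I may assume that every parameter converges and the labeling stabilizes to some $\lambda$. Write $C^*$ and $\beta^{*,i}_j$ for the limits. The defining closed constraints---containment in $[-n,n]^d$, monotonicity $\beta^{*,i}_0\leq\dots\leq\beta^{*,i}_{t_i+1}$, and granularity $\beta^{*,i}_{j+1}-\beta^{*,i}_j\geq 1/n$---pass to the limit. In particular $C^*$ has side length at least $1/n$ and is a genuine nontrivial cube, and the limiting splitting uses exactly $t_i$ distinct hyperplanes aligned to axis $i$.

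The main step is to verify that the limit splitting is fair for $f$. Let $R_r^{(m)}$ denote the union of cuboid pieces assigned by $\lambda$ to thief $r$ in the $m$-th splitting, and $R_r^*$ the analogous union in the limit. Fairness for $f_m$ reads $\mathcal{L}(f_m^{-1}(i)\cap R_r^{(m)})=\frac{1}{q}\mathcal{L}(f_m^{-1}(i)\cap C_m)$ for every color $i$ and thief $r$. Using the elementary inequalities $|\mathcal{L}(A)-\mathcal{L}(B)|\leq\mathcal{L}(A\triangle B)$ and $(A\cap B)\triangle(A'\cap B')\subseteq(A\triangle A')\cup(B\triangle B')$ together with the fact that all the relevant sets are contained in $[-n,n]^d$ yields
$$\bigl|\mathcal{L}(f_m^{-1}(i)\cap R_r^{(m)})-\mathcal{L}(f^{-1}(i)\cap R_r^*)\bigr|\leq\mathcal{L}(D_n(f_m,f))+\mathcal{L}(R_r^{(m)}\triangle R_r^*),$$
and an analogous estimate controls $\mathcal{L}(f_m^{-1}(i)\cap C_m)-\mathcal{L}(f^{-1}(i)\cap C^*)$. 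The first summand tends to $0$ since $f_m\to f$ in $\mathcal{M}$ forces $d_n(f_m,f)\to 0$; the second tends to $0$ because each $R_r^{(m)}$ is a finite union of cuboids whose defining coordinates converge to those of $R_r^*$. Passing to the limit therefore preserves fairness, placing $f\in\mathcal{B}_n^{(t_1,\dots,t_d)}$.

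The only delicate point is ruling out degeneration of the limit necklace, which is precisely what the granularity hypothesis $\geq 1/n$ supplies; the rest is routine compactness combined with continuity of Lebesgue measure under $L^1$-type perturbations of both the coloring and the region.
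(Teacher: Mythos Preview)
Your proof is correct and follows essentially the same approach as the paper: reduce to each $\mathcal{B}_n^{(t_1,\dots,t_d)}$, extract a convergent subsequence of the necklace, cut, and labeling data by compactness and finiteness, use the granularity bound to ensure the limit cube is nontrivial, and then pass fairness to the limit. The only difference is that where the paper simply asserts ``it is easy to see that this gives a fair $q$-splitting,'' you spell out the $L^1$-type estimate via symmetric differences, which is exactly what the paper leaves implicit.
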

\begin{proof} Since $\mathcal{B}_n$ is a finite union of $\mathcal{B}_n^{(t_i)}$ over all $t_1+\dots+t_d\leq t$ it is enough to show that sets $\mathcal{B}_n^{(t_i)}$ are closed. Let $(f_m)_{m\in\N}$ be a sequence of colorings from $\mathcal{B}_n^{(t_i)}$ converging  to $f$ in $\mathcal{M}$. For each $f_m$ there exists a necklace $C_m$ in $[-n,n]^d$ and a fair $q$-splitting of it with granularity at least $1/n$. Since $[-n,n]^d$ is compact we can assume that cubes $C_m$ converge to some cube $C$, and that hyperplane cuts also converge (notice that the granularity of this division of $C$ is at least $1/n$, in particular $C$ is a nontrivial). The only thing left is to define a $q$-labeling of pieces of the division to get $q$ equal parts. But there is a finite number of such labelings for each $C_m$, so one of them appears infinitely many times in the sequence. It is easy to see that this gives a fair $q$-splitting of the necklace $C$, so $f\in\mathcal{B}_n^{(t_i)}$.  
\end{proof}

\begin{lemma}\label{LemmaMain} 
If $k(q-1)>t+d+q-1$, then every $\mathcal{B}_n$ has empty interior. For $d=1$ the same holds provided $k(q-1)>t+2$. 
\end{lemma}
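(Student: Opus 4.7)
By Lemma \ref{LemmaClosed} the set $\mathcal{B}_n$ is closed, and by Lemma \ref{aprox} the cube colorings $\mathcal{I}_n$ are dense in $\mathcal{M}$, so it suffices to show that every $\epsilon$-ball around a given cube coloring $f_0 \in \mathcal{I}_n$ contains some $g \notin \mathcal{B}_n$. Given $f_0$ with cubic cells of side $1/N$ (refining if necessary so that $1/N < 1/n$), the plan is to construct $g$ by perturbing the grid: replace each grid coordinate $j/N$ on axis $i$ by a new position $p_{i,j}$, with $|p_{i,j}-j/N|$ arbitrarily small, chosen so that the entire collection $\{p_{i,j}\}$ is algebraically independent over $\mathbb{Q}$. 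The cell $\prod_i [p_{i,j_i},p_{i,j_i+1}]$ then inherits the original color of the corresponding cell of $f_0$, and by making the displacements small enough, $d(f_0,g)<\epsilon$. Existence of such small independent displacements is standard: the non-independent tuples form a meager subset (a countable union of proper algebraic varieties) of any small open box.

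Suppose for contradiction $g \in \mathcal{B}_n$, so some cube $C = \prod_i [\alpha_i,\alpha_i+\alpha_0]$ admits a fair $q$-splitting using cuts $\beta^i_1 < \dots < \beta^i_{t_i}$ aligned to axis $i$ (with $\sum_i t_i \leq t$) and a $q$-labeling $\pi$ of the resulting cuboids. I would fix a \emph{combinatorial type} $\tau$ -- the data specifying, for each coordinate $\alpha_i$, $\alpha_i+\alpha_0$, $\beta^i_j$, the cell $[p_{i,m},p_{i,m+1}]$ containing it. Since $\tau$ and $\pi$ have only finitely many possibilities (bounded in terms of $n,N,t,d$), it suffices to derive a contradiction within a single fixed pair $(\tau,\pi)$. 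Within such a choice, every measure $\mu_c(\text{part }\ell)$ is a polynomial of degree $\leq d$ in the $t+d+1$ real unknowns $\alpha_0,\alpha_1,\dots,\alpha_d,\beta^i_j$, with coefficients in $\mathbb{Z}[\{p_{i,j}\}]$, and the fair-splitting relations $\mu_c(\text{part }\ell)-\mu_c(\text{part }\ell+1)=0$ for $c=1,\dots,k$, $\ell=1,\dots,q-1$ form a system of $k(q-1)$ such polynomial equations.

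The core algebraic step is a transcendence-degree comparison over $K = \mathbb{Q}(\{p_{i,j}\})$. A real solution $(\alpha,\beta)$ to the system would satisfy $\trdeg_K K(\alpha_0,\dots,\alpha_d,\beta^i_j) \leq t+d+1$, while each polynomial equation that is not a consequence over $K$ of the others drops this transcendence degree by at least one. The hypothesis $k(q-1) > t+d+q-1$ is calibrated so that, even after accounting for the at-most-$(q-2)$ equations that can be redundant over $K$ for combinatorial reasons (roughly, from relabeling symmetries among the $q$ parts when compared pairwise), there remain $k(q-1)-(q-2) > t+d+1$ genuinely independent relations, which is incompatible with the bound on the transcendence degree. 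The sharper bound $k(q-1) > t+2$ in dimension $d=1$ would follow from the fact that the linearly ordered cuboids let the two cube parameters $\alpha_0,\alpha_1$ be absorbed into the cut data, leaving only $t+2$ effective unknowns and eliminating the $q-2$ slack.

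The main difficulty I expect is the precise algebraic bookkeeping in the transcendence step: identifying exactly which $q-2$ of the $k(q-1)$ equations can fail to be independent over $K$, showing this redundancy is maximal independently of the combinatorial type $\tau$, and verifying a separate direct argument for $d=1$ that recovers the improved bound. The remaining ingredients -- finiteness of combinatorial types, existence of small algebraically independent displacements, and polynomiality of the cuboid measures within a fixed type -- are routine once the transcendence inequality is established.
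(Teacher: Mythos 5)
Your construction diverges from the paper's at the decisive point, and the divergence is fatal. You perturb the grid lines of the approximating cube coloring to algebraically independent positions while keeping each cell's color. But if the cube coloring $f_0$ produced by Lemma \ref{aprox} is monochromatic on a large region of $[-n,n]^d$ (for instance when the original coloring is constant there), jiggling the grid changes nothing: a monochromatic necklace in that region still admits a fair $q$-splitting by equally spaced parallel cuts (or a grid of cuts), with granularity $\geq 1/n$ for $n$ large, so your $g$ stays in $\mathcal{B}_n$ and no contradiction can be reached. The paper's perturbation is different in kind: it keeps the grid rational and instead inserts into \emph{every} cell $k-1$ tiny disjoint cubes $D_{j;i_1,\dots,i_d}$, one of each non-white color, with rational corners and side lengths $x_{j;i_1,\dots,i_d}$ forming an algebraically independent set over $\Q$. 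This guarantees that every part of every candidate splitting captures transcendental amounts of every color, which is exactly what the algebraic step needs.

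Second, your core algebraic step is not a valid argument as stated. The principle ``each equation not a consequence of the others drops the transcendence degree by one, and more than $t+d+1$ independent equations in $t+d+1$ unknowns is a contradiction'' does not hold: an overdetermined polynomial system over $K$ can perfectly well have real solutions (they will simply have small transcendence degree over $K$), and nothing in your setup excludes this. You defer ``identifying which $q-2$ equations can be redundant'' as bookkeeping, but this independence is the entire content of the lemma, and your framework (all coefficients in $\mathbb{Z}[\{p_{i,j}\}]$, unknowns $\alpha,\beta$) gives no handle on it. The paper's mechanism is concrete: granularity $\geq 1/n$ versus cell side $\leq 1/(2n)$ forces each of the $q$ parts to contain an undivided cell $C_{i^l_1,\dots,i^l_d}$, so the variable $x_{j;i^l_1,\dots,i^l_d}$ enters the equation comparing part $l$ with part $1$ only through the isolated monomial $x^d_{j;i^l_1,\dots,i^l_d}$. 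Moving everything else to the left, the $(k-1)(q-1)$ right-hand sides are algebraically independent over the field $\K$ generated by the remaining $x$'s, hence have transcendence degree $(k-1)(q-1)$ over $\K$, while the left-hand sides lie in $\K(\alpha_i,\beta_m)$, whose transcendence degree over $\K$ is at most $t+d+1-c$ with $c\geq 1$ coming from the $q-1$ volume equalities (and $c\geq q-1$ for $d=1$, where those equalities are independent linear relations). Equality of the two sets of numbers yields $(k-1)(q-1)\leq t+d$, the desired contradiction. Note also that your explanation of the $d=1$ improvement (``absorbing $\alpha_0,\alpha_1$ into the cut data'') misidentifies its source: there are $t+2=t+d+1$ unknowns in dimension one already, and the gain comes solely from the linear independence of the volume equations.
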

\begin{proof}
Suppose the assertion of the lemma is false: there is some $f\in\mathcal{B}_n$ and $\epsilon>0$ for which $K(f,\epsilon)=\{g\in\mathcal{M}:d(f,g)<\epsilon\}\subset\mathcal{B}_n$. By Lemma \ref{aprox} there is a cube coloring $g\in\mathcal{I}_n$ such that $d(f,g)<\epsilon/2$, so $K(g,\epsilon/2)\subset\mathcal{B}_n$. 

The idea is to modify slightly the coloring $g$ so that the new coloring will be still close to g, but there will be no necklaces inside $[-n,n]^d$ possessing a $q$-splitting of size at most $t$ and granularity at least $1/n$. 

Without loss of generality we may assume that there are equally spaced axis-aligned cubes $C_{i_1,\dots,i_d}$ for $i_1,\dots,i_d\in\{1,\dots,N\}$ in $[-n,n]^d$ such that $N > 4n^2$, and each cube is filled with a unique color in the cube coloring $g$. Notice that edges of cubes are rational, equal to $\frac{2n}{N}$. Let $\delta > 0$ be a rational number satisfying $\delta^d<\min\{\frac{\epsilon}{2N^d},\left(\frac{2n}{N}\right)^d\}$. Inside each cube $C_{i_1,\dots,i_d}$ choose a volume $\delta^d$ axis-aligned cube $D_{i_1,\dots,i_d}$ with corners in rational points $\Q^d$.  Let us call `white' the first color from our set of colors $\{1,\dots,k\}$. For each cube $D_{i_1,\dots,i_d}$ and a color $j=2,\dots,k$ we choose small enough real number $x_{j;i_1,\dots,i_d}>0$ and an axis-aligned cube $D_{j;i_1,\dots,i_d}\subset D_{i_1,\dots,i_d}$ such that:
\begin{enumerate}
\item length of the side of $D_{j;i_1,\dots,i_d}$ equals to $x_{j;i_1,\dots,i_d}$
\item cubes $D_{j;i_1,\dots,i_d}$ for $j=2,\dots,k$ are disjoint
\item one of corners of $D_{j;i_1,\dots,i_d}$ is rational (lies in $\Q^d$)
\item the set $\{x_{j;i_1,\dots,i_d}:j\in\{2,\dots,k\},i_1,\dots,i_d\in\{1,\dots,N\}\}$ is algebraically independent over $\Q$.
\end{enumerate}

Now we define a coloring $h$, which is a modification of $g$. Let $h$ equals to $g$ outside cubes $D_{i_1,\dots,i_d}$ for $i_1,\dots,i_d\in\{1,\dots,N\}$. Inside the cube $D_{i_1,\dots,i_d}$ let $h$ be equal to `white' everywhere except cubes $D_{j;i_1,\dots,i_d}$, where it is $j$. 

Clearly $d(g,h)<\epsilon/2$, so $h\in\mathcal{B}_n$. Hence there is a necklace $A=[\alpha_1,\alpha_1+\alpha_0]\times\dots\times[\alpha_d,\alpha_d+\alpha_0]\subset [-n,n]^d$ which has a fair $q$-splitting of size at most $t$ and granularity at least $1/n$. Denote by $\beta_1,\dots,\beta_t$ the coordinates of axis-aligned hyperplane cuts (for $H=\{x_i=\beta\}$ we get $\beta$). 

Since granularity of the splitting is at least $1/n$, and edges of cubes $C_{i_1,\dots,i_d}$ are smaller or equal to $1/(2n)$, each piece $P$ of the division contains a cube $C_{i_1,\dots,i_d}\subset P$ for some $i_1,\dots,i_d$ (so in particular $C_{i_1,\dots,i_d}$ is not divided by any hyperplane cut). Each of $q$ parts contains at least one piece and as a consequence at least one cube. For each part we fix such a cube $C_{i^l_1,\dots,i^l_d}$ for $l=1,\dots,q$.

Now the key observation is that the amount of color $j$ different from `white' contained in each part $l$ of the $q$-splitting is a polynomial $W_{j,l}$ from $$\Q[\alpha_i,\beta_m,x_{j;i_1,\dots,i_d}:\;i=0,\dots,d;\;m=1,\dots,t;\;(i_1,\dots,i_d)\in\{1,\dots,N\}^d].$$ 
Comparing the amount of color $j$ in the $l$-th part and the first one we get
$$W_{j,l}(\alpha_i,\beta_m,x_{j;i_1,\dots,i_d})=W_{j,1}(\alpha_i,\beta_m,x_{j;i_1,\dots,i_d}).$$
Since the cube $C_{i^l_1,\dots,i^l_d}$ is not divided by any hyperplane from our splitting, a variable $x_{j;i^l_1,\dots,i^l_d}$ appears only in the component $x^d_{j;i^l_1,\dots,i^l_d}$ in the polynomial $W_{j,l}$. Denote $W_{j,l}'=W_{j,l}-x^d_{j;i^l_1,\dots,i^l_d}$. We get polynomial equations $(\clubsuit)$:
$$(W_{j,1}-W_{j,l}')(\alpha_i,\beta_m,x_{j;i_1,\dots,i_d}:\; (i_1,\dots)\neq (i^r_1,\dots)\; r=2,\dots,q)=x^d_{j;i^l_1,\dots,i^l_d},$$
with coefficients in $\Q$, for all $j=2,\dots,k$ and $l=2,\dots,q$.

Denote by $T$ the set $\{\alpha_i,\beta_m:\;i=0,\dots,d;\;m=1,\dots,t\}$, and suppose its transcendence degree over $\Q$ equals to $t+d+1-c$, for some $c$. We will show that $c\geq 1$, and $c\geq q-1$ for $d=1$ (notice that in the case $d=1$ we can assume, without loss of generality, that $t\geq q-1$). Observe that the volume of each part of the splitting is a polynomial over $\Q$ in variables $\alpha_0,\dots\alpha_d,\beta_1,\dots,\beta_t$. All $q$ parts have to be of the same volume, so we get $q-1$ nontrivial polynomial equations. Each of them shows that $T$ is algebraically dependent set over $\Q$, hence $c\geq 1$. For us the ideal situation would be if equations were `independent' leading to $c=q-1$, since it would give a better lower bound $k(q-1)>t+d+1$. Unfortunately, it is not always the case, see Example \ref{q-1}. However in dimension one these equations are linear, and it is easy to see that they are linearly independent. Hence $c\geq q-1$ for $d=1$. Denote by $\K$ the field
$$\Q(x_{j;i_1,\dots,i_d}:\;(i_1,\dots,i_d)\neq (i^r_1,\dots,i^r_d)\; r=2,\dots,q).$$
Clearly, inclusion $\Q\subset\K$ implies that $\trdeg_{\K}(T)\leq\trdeg_{\Q}(T)=t+d+1-c$. 

Denote by $L$ the set of left sides of equations $(\clubsuit)$, and by $R$ the set of right sides of equations $(\clubsuit)$. We have that $L\subset\K(T)$, so $\trdeg_{\K}(L)\leq \trdeg_{\K}(T)=t+d+1-c$. From condition $(4)$ of the choice of $x_{j;i_1,\dots,i_d}$ it follows that the set $R$ is algebraically independent in $\K$, that is $\trdeg_{\K}(R)=(k-1)(q-1)$. But obviously $L=R$, hence $(k-1)(q-1)\leq t+d$ (and $(k-1)(q-1)\leq t-q+3$ for $d=1$) and we get a contradiction.
\end{proof}

\begin{example}\label{q-1}
Consider $d=2,q=4,t_1=1,t_2=1$. Then equality of volumes of three parts of the splitting implies that the fourth one also has the same volume. This phenomena is caused by the geometry of $\R^2$ (or $\R^d$ for $d\geq 2$ in general) and axis-aligned hyperplanes. It shows that in general it is easier to $q$-split fairly a whole necklace, than its Lebesgue measurable subset. In dimension one these situations do not happen.
\end{example}

Now from Lemmas \ref{LemmaClosed} and \ref{LemmaMain} it follows that if $k(q-1)>t+d+q-1$, or $k(q-1)>t+2$ for $d=1$, then sets $\mathcal{B}_n$ are nowhere dense, which proves the following theorem.

\begin{theorem}\label{TheoremNecklaces}
For every integers $d,k,t\geq 1$ and $q\geq 2$, if $k(q-1)>t+d+q-1$, then there exists a measurable $k$-coloring of $\R^d$ such that no $d$-dimensional necklace has a fair $q$-splitting using at most $t$ axis-aligned hyperplane cuts. The set of such colorings is dense. For $d=1$ the same holds provided $k(q-1)>t+2$. 
\end{theorem}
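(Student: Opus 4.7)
The plan is to derive Theorem \ref{TheoremNecklaces} as an almost immediate consequence of the Baire category theorem: all the hard work has already been absorbed into Lemma \ref{LemmaClosed} and Lemma \ref{LemmaMain}, so the theorem itself collapses to routine topological bookkeeping.

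First I would verify that the set $\mathcal{G}_t = \mathcal{M} \setminus \bigcup_{n=1}^\infty \mathcal{B}_n$ defined in Section \ref{SectionAxisAlignedHyperplaneCuts} is indeed the set of colorings with the desired property. On one hand, membership of $f$ in any $\mathcal{B}_n$ witnesses a necklace in $[-n,n]^d$ with a fair $q$-splitting using at most $t$ axis-aligned hyperplane cuts, so colorings in $\mathcal{G}_t$ admit no such necklace. Conversely, suppose $f \notin \mathcal{G}_t$ were not in the desired set, i.e.\ some necklace $C \subset \R^d$ admits a fair $q$-splitting with $t' \leq t$ axis-aligned hyperplane cuts. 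Writing $t' = t_1 + \dots + t_d$ according to the axes, and choosing $n$ large enough so that $C \subset [-n,n]^d$ and so that $1/n$ is at most the (positive) granularity of the splitting, we get $f \in \mathcal{B}_n^{(t_i)} \subset \mathcal{B}_n$.

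Next I would combine the two preceding lemmas. Under the hypothesis $k(q-1) > t+d+q-1$ (respectively $k(q-1) > t+2$ when $d=1$), Lemma \ref{LemmaClosed} gives that each $\mathcal{B}_n$ is closed in $\mathcal{M}$ and Lemma \ref{LemmaMain} gives that each $\mathcal{B}_n$ has empty interior. A closed set with empty interior is nowhere dense, so the countable union $\bigcup_{n=1}^\infty \mathcal{B}_n$ is of first category in $\mathcal{M}$.

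Finally I would invoke the Baire category theorem. Section \ref{SectionTheSetting} establishes that $\mathcal{M}$ is a complete metric space, so the complement of any first-category subset is dense. Applied to our situation this yields that $\mathcal{G}_t$ is dense in $\mathcal{M}$, and in particular nonempty, which gives simultaneously the existence statement and the density statement of the theorem. The main obstacle—the algebraic-independence argument comparing transcendence degrees of left-hand and right-hand sides of the equations expressing fairness of the $q$-splitting—has already been isolated and settled in Lemma \ref{LemmaMain}, and no further technical input is needed.
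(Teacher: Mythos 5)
Your proposal is correct and follows exactly the paper's own route: the paper likewise deduces Theorem \ref{TheoremNecklaces} directly from Lemmas \ref{LemmaClosed} and \ref{LemmaMain} by noting that the sets $\mathcal{B}_n$ are nowhere dense, so their union is of first category and the Baire category theorem gives density of $\mathcal{G}_t$. Your extra verification that every coloring admitting a fairly splittable necklace lands in some $\mathcal{B}_n$ (by choosing $n$ to dominate both the necklace and the reciprocal of the granularity) is a point the paper leaves implicit, and it is handled correctly.
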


By modifying slightly the proof of Theorem \ref{TheoremNecklaces} one can easily obtain a version for cuboids with inequality $k(q-1)>t+2d+q-2$.

The idea standing behind the proof is to count `degrees of freedom' of moving a necklace and cuts splitting it and to compare it with the number of equations forced by existence of a fair splitting. If the number of degrees of freedom is less than the number of equations, then there exists a coloring with the property that no necklace can be fairly split. Even though the intuitive meaning of degree of freedom is clear, it is hard to define it rigorously to be able to make use of it. It turns out that the transcendence degree over suitably chosen field of the set $\{\alpha_i,\beta_m:\;i=0,\dots,d;\;m=1,\dots,t\}$ works. 

As we will see in Section \ref{SectionArbitraryHyperplaneCuts}, if we allow arbitrary hyperplane cuts, then each such cut adds $d$ degrees of freedom, not only one as an axis-aligned hyperplane cut. Therefore for arbitrary hyperplane cuts we will get roughly $d$ times worse bound. The same reason explains the difference between results for cubes and cuboids, the latter have larger degree of freedom.

For $d=1$ and $q=2$ Theorem \ref{TheoremNecklaces} gives exactly Theorem \ref{TheoremAGLM} -- the main result of \cite{AlGrLaMi09}. However, the technique of showing that sets $\mathcal{B}_n$ have empty interior is quite different. To show a contradiction we used in \cite{AlGrLaMi09} only one color, which was different from colors of cut points and the end points of an interval. If $d>1$ or $q>2$ this is not always possible. Additionally argument from \cite{AlGrLaMi09} was not only algebraic, but contained also an analytic part, it used some inequalities on measure of colors. We conjecture that our bound from Theorem \ref{TheoremNecklaces} for $q=2$ is tight. 

\begin{conjecture}
For every $d,k,t\geq 1$ and a measurable $k$-coloring of $\R^d$, if $k\leq t+d+1$, then there exists a $d$-dimensional necklace which has a fair $2$-splitting using at most $t$ axis-aligned hyperplane cuts.
\end{conjecture}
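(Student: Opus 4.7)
The conjecture is the tightness complement of Theorem~\ref{TheoremNecklaces}: once $k\leq t+d+1$ every measurable $k$-coloring should admit some necklace possessing a fair $2$-splitting with at most $t$ axis-aligned hyperplane cuts. Since this is now a positive (existence) assertion, the Baire category framework that carried the negative direction does not apply, and my plan is to attack it by a topological configuration space / test map argument of Borsuk--Ulam flavour, analogous to the proof of Alon's Theorem~\ref{al} but enlarged to exploit the extra $d+1$ degrees of freedom coming from the choice of the necklace itself.

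First I would set up the configuration space. A cube $[\alpha_1,\alpha_1+\alpha_0]\times\cdots\times[\alpha_d,\alpha_d+\alpha_0]$ contributes $d+1$ real parameters, while the $t$ axis-aligned cuts together with the two-sided labelling of the resulting cuboidal pieces can be encoded in the standard necklace-splitting way (squared coordinates summing to $1$) by a sphere of dimension $t$ carrying a free antipodal $\mathbb{Z}_2$-action that swaps the two parts of the division. The first step is therefore to package these two pieces of data into a single compact space $X$ of topological dimension $t+d+1$ equipped with a $\mathbb{Z}_2$-action that is antipodal on the sign coordinates and trivial on the cube/cut coordinates, and to check that a small neighbourhood of the boundary (where the configuration degenerates) can be handled away.

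Next I would define the test map $\Phi\colon X\to\R^k$ whose $j$-th coordinate is the signed imbalance of color~$j$ between the two parts, verify that $\Phi$ is $\mathbb{Z}_2$-equivariant with respect to the antipodal action on $\R^k$, and observe that a zero of $\Phi$ in the nondegenerate locus is precisely the desired fair $2$-splitting. Were $X$ genuinely homeomorphic to an antipodal sphere $S^{t+d+1}$, Borsuk--Ulam would immediately close the argument whenever $k\leq t+d+1$; the delicate point is that the cube parameters neither live in a sphere in any canonical way nor carry any natural free involution, so a direct application of Borsuk--Ulam is not available. One would instead need a replacement such as a Bourgin--Yang type cohomological index inequality on a product like $S^t\times D^{d+1}$, or a fibrewise Borsuk--Ulam argument in which the sphere factor varies with the cube.

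The principal obstacle, and I suspect the reason this remains a conjecture, is that Theorem~\ref{al} already shows that $t$ cuts are \emph{not} in general enough to equalise $t+d+1$ arbitrary continuous measures on $[0,1]^d$. Any successful proof must therefore feed the restriction ``the $k$ measures arise as the color classes of a single coloring'' into the topological argument in an essential way; for instance by using a Lebesgue density / local structure analysis to show that for every measurable coloring the $d+1$ cube parameters provide genuinely independent directions in which color balances can be perturbed, and then coupling that independence with an equivariant obstruction. This rigidity is the same phenomenon exploited in the opposite direction by the algebraic-independence step of Lemma~\ref{LemmaMain}, so one may hope to ``reverse'' that picture; turning this heuristic into a real coupling between the measure-theoretic rigidity of colorings and the topology of $X$ is, in my view, the heart of the difficulty.
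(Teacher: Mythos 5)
The statement you are addressing is a \emph{conjecture} in the paper, stated without proof; the author explicitly leaves it open and even records evidence that it is delicate (the analogous statement for $k$ arbitrary continuous measures in place of a measurable $k$-coloring is false, as the paper notes after the conjecture). So there is no proof in the paper against which to measure your proposal, and your text is, by your own account, a research programme rather than a proof: the two steps on which everything hinges are left entirely open. First, the configuration space you describe carries no free $\mathbb{Z}_2$-action on the cube parameters $\alpha_0,\dots,\alpha_d$, so Borsuk--Ulam does not apply, and you do not supply the ``Bourgin--Yang type'' or ``fibrewise'' substitute you invoke; without a concrete equivariant index computation on something like $S^t\times D^{d+1}$ (where the disk factor is acted on trivially), no nonexistence-of-equivariant-map conclusion follows, and in fact the standard ideal-valued index of such a product is governed only by the sphere factor, which is exactly why the naive bound stalls at $k\leq t$ rather than $k\leq t+d+1$.

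Second, and more fundamentally, you correctly identify that any proof must exploit the hypothesis that the $k$ measures are the color classes of a single partition of $\R^d$ (since for general measures the claim fails), but you offer no mechanism by which this hypothesis enters a topological argument. ``Lebesgue density analysis showing the $d+1$ cube parameters provide independent directions'' is a heuristic, not a lemma: the failure mode you must rule out is precisely a coloring for which varying the cube produces color-balance functions that are functionally dependent, and nothing in your sketch excludes this. The paper's own machinery (transcendence degree over $\Q$ of the parameters, Lemma \ref{LemmaMain}) runs only in the negative direction and cannot be ``reversed'' formally, because algebraic independence of parameters for a generic coloring says nothing about what happens for the specific adversarial coloring the conjecture quantifies over. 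In short: the approach is a plausible direction, but both load-bearing steps are missing, so this does not constitute a proof of the conjecture.
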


However, we know that if we replace a $k$-coloring by arbitrary $k$ continuous measures, then the answer is negative. In Section \ref{SectionColoringsDistinguishingCubes} we provide other examples when the case of arbitrary continuous measures differs from the case of measurable coloring.

The last part of this Section is devoted to colorings of a given fixed necklace that avoid a fair splitting of a certain size.

For $d,k\geq 1$, $q\geq 2$ and a $d$-dimensional necklace $C=[\alpha_1,\alpha_1+\alpha_0]\times\dots\times[\alpha_d,\alpha_d+\alpha_0]$ we define the set $\mathcal{B}_n^{(t_i)}$ of $k$-colorings from $\mathcal{M}$ for which there is a fair $q$-splitting of $C$ with exactly $t_i$ hyperplanes aligned to $i$-th axis, for each $i$, and granularity at least $1/n$. Finally let
$$\mathcal{B}_n=\bigcup_{t_1+\dots+t_d\leq t}\mathcal{B}_n^{(t_i)}$$
be `bad' colorings. The aim, as previously, is to show that sets $\mathcal{B}_n$ are nowhere dense, provided suitable relation between $d,k,q$ and $t$ holds. This will imply that the union $\bigcup_{n=1}^{\infty}\mathcal{B}_n$ is of first category, so the set of colorings which we are looking for is dense. Analogously to Lemma \ref{LemmaClosed} sets $\mathcal{B}_n$ are closed in $\mathcal{M}$. Similarly to Lemma \ref{LemmaMain} if $k(q-1)>t+q-2$, then $\mathcal{B}_n$ has empty interior. The only difference is that we want the set 
$$\{x_{j;i_1,\dots,i_d}:j\in\{2,\dots,k\},i_1,\dots,i_d\in\{1,\dots,N\}\}$$ 
to be algebraically independent over $\Q(\alpha_i:i=0,\dots,d)$. We also add $\alpha_0,\dots,\alpha_d$ to the base field $\K$, so the transcendence degree of the left side of equations is equal to $t-c\leq t-1$, while of the right side it is still $(k-1)(q-1)$. This proves the following theorem.

\begin{theorem}\label{TheoremNecklace}
For every integers $d,k,t\geq 1$ and $q\geq 2$, if $k(q-1)>t+q-2$, then there is a measurable $k$-coloring of a $d$-dimensional necklace for which there is no fair $q$-splitting using at most $t$ axis-aligned hyperplane cuts. The set of such colorings is dense. 
\end{theorem}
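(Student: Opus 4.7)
The plan is to follow the same Baire-category scheme already deployed for Theorem~\ref{TheoremNecklaces}. Having fixed the necklace $C=[\alpha_1,\alpha_1+\alpha_0]\times\dots\times[\alpha_d,\alpha_d+\alpha_0]$, one defines $\mathcal{B}_n^{(t_i)}\subset\mathcal{M}$ exactly as in the paragraph preceding the statement, sets $\mathcal{B}_n=\bigcup_{t_1+\dots+t_d\leq t}\mathcal{B}_n^{(t_i)}$, and aims to prove that each $\mathcal{B}_n$ is closed and has empty interior. The closedness is a verbatim repetition of Lemma~\ref{LemmaClosed}: a limit of splittings of the (now fixed) cube $C$ has its $\beta_m$-coordinates converging in the compact region $[-n,n]^d$, the granularity bound survives the limit, and the $q$-labeling stabilises along a subsequence.

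The core of the argument is the analogue of Lemma~\ref{LemmaMain}. Starting from $f\in\mathcal{B}_n$ I would approximate it by a cube coloring $g\in\mathcal{I}_n$ using Lemma~\ref{aprox}, then perturb $g$ inside each of the small cubes $C_{i_1,\dots,i_d}$ by inserting axis-aligned subcubes $D_{j;i_1,\dots,i_d}$ of side lengths $x_{j;i_1,\dots,i_d}$, with rational corners. The only change from the proof of Lemma~\ref{LemmaMain} is condition~(4): here I demand that the set $\{x_{j;i_1,\dots,i_d}\}$ be algebraically independent over $\Q(\alpha_0,\dots,\alpha_d)$ rather than over $\Q$. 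This is legitimate because $\Q(\alpha_0,\dots,\alpha_d)$ has countable transcendence degree, so algebraically independent tuples of any desired cardinality exist arbitrarily close to any prescribed rational base point, preserving the $\epsilon/2$-approximation.

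If the perturbed coloring $h$ still belonged to $\mathcal{B}_n$, I would extract from a hypothetical fair $q$-splitting the polynomial identities $(\clubsuit)$ of Lemma~\ref{LemmaMain}, but now taking the base field to be
\[
\K=\Q(\alpha_0,\dots,\alpha_d)\bigl(x_{j;i_1,\dots,i_d}:(i_1,\dots,i_d)\neq(i^r_1,\dots,i^r_d),\ r=2,\dots,q\bigr).
\]
With $T=\{\beta_1,\dots,\beta_t\}$ the only remaining unknowns, the $q-1$ volume-equality equations show that $T$ is algebraically dependent over $\Q(\alpha_0,\dots,\alpha_d)$, so $\trdeg_{\K}(T)\leq t-1$. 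The left-hand sides of $(\clubsuit)$ lie in $\K(T)$, giving $\trdeg_{\K}(L)\leq t-1$, while condition~(4) yields $\trdeg_{\K}(R)=(k-1)(q-1)$. From $L=R$ I would deduce $(k-1)(q-1)\leq t-1$, i.e.\ $k(q-1)\leq t+q-2$, contradicting the hypothesis.

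The main obstacle is the same delicate point flagged in Example~\ref{q-1}: one must be sure the volume equations actually contribute at least one algebraic dependence among the $\beta_m$, so that the bound $\trdeg_\K(T)\leq t$ drops to $t-1$. Since we are in the fixed-cube setting and $q\geq 2$, at least one volume-equality equation is nontrivial (it involves some $\beta_m$), and that alone suffices; no stronger ``linear independence'' of the $q-1$ equations, as required in the $d=1$ sharpening of Lemma~\ref{LemmaMain}, is needed here. Once $\mathcal{B}_n$ is shown to be closed and of empty interior, $\bigcup_n\mathcal{B}_n$ is of first category, and Baire's theorem yields a dense set of measurable $k$-colorings of $C$ with no fair $q$-splitting of size at most $t$.
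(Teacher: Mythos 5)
Your proposal is correct and follows essentially the same route as the paper: fix the necklace, require the side lengths $x_{j;i_1,\dots,i_d}$ to be algebraically independent over $\Q(\alpha_0,\dots,\alpha_d)$, adjoin the $\alpha_i$ to the base field $\K$, and compare $\trdeg_{\K}$ of the two sides of $(\clubsuit)$ to get $(k-1)(q-1)\leq t-1$. Your remark that only one nontrivial volume equation is needed (so the subtlety of Example~\ref{q-1} is harmless here) matches the paper's use of $t-c\leq t-1$.
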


For $q=2$ the above theorem is tight. It asserts that there are measurable $(t+1)$-colorings of a given necklace for which there is no fair $2$-splitting using at most $t$ axis-aligned hyperplane cuts. The set of such colorings is even dense. Due to a result of Goldberg and West \cite{GoWe85} it is the minimum number of colors. Together with Theorem \ref{lozi} of de Longueville and \v{Z}ivaljevi\'{c} this fully characterizes the set of tuples $(t_1,\dots,t_d)\in\N^d$, such that any $k$-colored $d$-dimensional necklace has a fair $2$-splitting using at most $t_i$ hyperplanes aligned to $i$-th axis for each $i$. Namely, it is the set $$\{(t_1,\dots,t_d)\in\N^d:\;k\leq t_1+\dots+t_d\}.$$

\section{Arbitrary Hyperplane Cuts}\label{SectionArbitraryHyperplaneCuts}

An arbitrary hyperplane (not necessarily axis-aligned) can be described by $d$ numbers $\beta^1,\dots,\beta^d$, which we will call \emph{parameters}. There are several ways of defining parameters, fortunately usually transition functions between different definitions are rational over $\Q$. For example for a hyperplane not passing through $O=(0,\dots,0)$ we can define parameters to be coefficients of its normalized equation $\beta^1x_1+\dots+\beta^dx_d=1$. In general pick $d+1$ rational points in general position $P_1,\dots,P_{d+1}$. Then no hyperplane contains all of them. We describe a hyperplane $H$ using coefficients of its normalized equation with $O$ being moved to a point $P_i$ which does not belong to $H$. We will make use of the following classical fact.

\begin{lemma}\label{rat} 
Volume of a bounded set in $\R^d$ which is an intersection of a finite number of half spaces is a rational function over $\Q$ of parameters of hyperplanes supporting these half spaces.
\end{lemma}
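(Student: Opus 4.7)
The set in question is a bounded convex polytope $P\subset\R^d$. The plan is to triangulate $P$ into $d$-simplices whose vertices are vertices of $P$, express each simplex-volume as a polynomial in its vertex coordinates, and observe that those coordinates are rational functions over $\Q$ of the hyperplane parameters.

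First I would reduce to general position: after a small perturbation one may assume that any $d$ of the bounding hyperplanes meet in a single point and no $d+1$ have a common point; since the quantity we analyze is built from finitely many rational expressions in the parameters, it suffices to verify the statement on a dense open subset of parameter space. In that open set a well-defined combinatorial type of $P$ is fixed, and a triangulation depending only on this combinatorial type — for instance, the pulling triangulation from a chosen vertex — produces simplices $\Delta$ each of whose $d+1$ vertices is a vertex of $P$.

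Next, every vertex $v$ of $P$ is the unique solution of a linear system consisting of $d$ of the normalized hyperplane equations (with origin shifted to one of the rational points $P_i$ not lying on any of those hyperplanes, exactly as in the set-up preceding the lemma). By Cramer's rule the coordinates of $v$ are ratios of determinants that are polynomials with integer coefficients in the parameters $\beta^1,\dots,\beta^d$ of the hyperplanes; hence each coordinate of every vertex is a rational function over $\Q$ of the parameters. The $d$-volume of a simplex with vertices $v_0,\dots,v_d$ equals $\tfrac{1}{d!}\bigl|\det(v_1-v_0,\dots,v_d-v_0)\bigr|$, a polynomial in the $v_i$ up to sign; within a fixed combinatorial type the sign is constant and can be absorbed, so each simplex-volume becomes a rational function of the parameters. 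Summing over the triangulation yields the required rational expression for $\mathrm{vol}(P)$.

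The main obstacle is precisely the dependence of the combinatorial type of $P$ on the parameters: as one crosses a non-generic locus, vertices of $P$ appear or disappear, edges flip, and the triangulation changes, so globally $\mathrm{vol}(P)$ is only piecewise rational. However, within each open stratum of parameter space where the combinatorial type is constant, the argument above gives a single rational function over $\Q$. This local rationality is exactly what is used in Section~\ref{SectionArbitraryHyperplaneCuts}: one works in a small neighbourhood of a prescribed configuration, where the combinatorial type of every piece of the splitting is fixed, and then the measure of each color in each part becomes a rational function of the parameters, enabling the transcendence-degree comparison analogous to the proof of Lemma~\ref{LemmaMain}.
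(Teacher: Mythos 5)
Your proof is correct and follows essentially the same route as the paper's: triangulate the polytope into simplices whose vertices are rational functions over $\Q$ of the parameters (via Cramer's rule), compute each simplex volume by the determinant formula, and sum. The only differences are cosmetic (you use a pulling triangulation where the paper uses the barycentric subdivision of $S$) plus one point where you are actually more careful than the paper: you make explicit that the combinatorial type of $P$ varies with the parameters, so the volume is only \emph{piecewise} rational, with a single rational expression valid on each open stratum of parameter space --- a caveat the paper's proof silently glosses over but which is harmless for the intended application, exactly as you explain.
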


\begin{proof}
Such a set $S$ is a convex polytope. From Cramer's formula it follows that coordinates of vertices of $S$ are rational functions over $\Q$ of parameters. We make the barycentric subdivision of $S$ into simplices. Vertices of these simplices are barycenters of faces of $S$, hence they are also rational functions over $\Q$ of parameters.

Volume of each simplex equals to the determinant of the matrix of coordinates of its vertices divided by $d!$. Thus it is a rational function over $\Q$ of parameters. Then so is the volume of the original polytope.   
\end{proof}

Granularity of the splitting is the largest $g$ such that any piece of the splitting contains an axis-aligned cube with side of length $g$. We denote by $\mathcal{B}_n$ the set of $k$-colorings from $\mathcal{M}$ for which there exists at least one $d$-dimensional necklace contained in $[-n, n]^d$ which has a fair $q$-splitting with at most $t$ hyperplanes and granularity at least $1/n$. These are `bad' colorings. 

As in the previous Section \ref{SectionAxisAlignedHyperplaneCuts} our aim is to show that sets $\mathcal{B}_n$ are nowhere dense, provided suitable relation between $d,k,q$ and $t$ holds. This will imply that the set of colorings which we are looking for is dense.

The proof that sets $\mathcal{B}_n$ are closed in $\mathcal{M}$ goes exactly the same as the proof of Lemma \ref{LemmaClosed}. To show that if $(k-1)(q-1)>dt+d$, then every $\mathcal{B}_n$ has empty interior we repeat the proof of Lemma \ref{LemmaMain} and make small modifications. Instead of a single parameter $\beta_i$ of a hyperplane we have $d$ parameters $\beta_i^1,\dots,\beta_i^d$. Due to Lemma \ref{rat} the amount of color $j$ different from `white' contained in each part $l$ of the $q$-splitting is a rational function $R_{j,l}$ from 
$$\Q(\alpha_i,\beta_m^p,x_{j;i_1,\dots,i_d}:\;i=0,\dots,d;\;m=1,\dots,t;\;p=1,\dots,d;\;(i_1,\dots,i_d)),$$ 
instead of a polynomial $W_{j,l}$. Now the transcendence degree over the field
$$\K=\Q(x_{j;i_1,\dots,i_d}:\;(i_1,\dots,i_d)\neq (i^r_1,\dots,i^r_d)\; r=2,\dots,q)$$ 
of the set of left sides of equations analogous to $(\clubsuit)$ is at most $dt+d+1-1$, while of the set of right sides equals to $(k-1)(q-1)$. Hence the hypothesis that $\mathcal{B}_n$ has non-empty interior implies inequality $(k-1)(q-1)\leq dt+d$. It follows that if $(k-1)(q-1)>dt+d$, then sets $\mathcal{B}_n$ are nowhere dense. This proves the following theorem.

\begin{theorem}\label{TheoremNecklacesArbitrary}
For every integers $d,k,t\geq 1$ and $q\geq 2$, if $k(q-1)>dt+d+q-1$, then there is a measurable $k$-coloring of $\R^d$ such that no $d$-dimensional necklace has a fair $q$-splitting using at most $t$ arbitrary hyperplane cuts. The set of such colorings is dense. 
\end{theorem}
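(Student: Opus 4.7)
My plan follows the Baire-category template from Section \ref{SectionTheSetting}. For each positive integer $n$, let $\mathcal{B}_n \subset \mathcal{M}$ be the set of $k$-colorings admitting a necklace $A \subset [-n,n]^d$ together with a fair $q$-splitting using at most $t$ arbitrary hyperplane cuts and granularity at least $1/n$, with granularity defined as in the paragraph preceding the theorem. The goal is to show each $\mathcal{B}_n$ is closed and has empty interior, so that $\mathcal{G}_t = \mathcal{M} \setminus \bigcup_n \mathcal{B}_n$ is dense by Baire. Closedness is essentially the proof of Lemma \ref{LemmaClosed}: from a convergent sequence of bad colorings $f_m \to f$ pass to subsequences with $A_m \to A$, each of the $t$ hyperplanes converging in its parameter space (bounded because cuts are relevant only inside $[-n,n]^d$), granularity $\geq 1/n$ preserved in the limit so $A$ is nontrivial, and one of the finitely many $q$-labelings occurring infinitely often.

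For the empty-interior assertion I would mimic the proof of Lemma \ref{LemmaMain} with the modifications dictated by allowing arbitrary cuts. After approximating any alleged interior point by a cube coloring $g \in \mathcal{I}_n$ on an $N^d$ grid with $N > 4n^2$, I insert inside each grid cube $C_{i_1,\dots,i_d}$ the tiny axis-aligned sub-cubes $D_{j;i_1,\dots,i_d}$ of rational-cornered sides $x_{j;i_1,\dots,i_d}$, chosen so that the full set $\{x_{j;i_1,\dots,i_d}\}$ is algebraically independent over $\Q$, producing a coloring $h \in \mathcal{B}_n$. A witnessing splitting now uses $t$ hyperplanes each described by $d$ parameters $\beta_m^1,\dots,\beta_m^d$ (coefficients of the normalized equation based at a suitable rational point, as in the discussion preceding Lemma \ref{rat}, whose rational transitions keep transcendence degrees over $\Q$ invariant), and by the granularity hypothesis together with $N > 4n^2$ every piece of the splitting contains a full undivided grid cube $C_{i_1^l,\dots,i_d^l}$; fix one such cube per part $l = 1,\dots,q$.

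The algebraic heart of the argument invokes Lemma \ref{rat}: each piece of the splitting is a bounded convex polytope whose supporting hyperplanes have either rational coefficients (the faces of $A$ and of the $D$-cubes) or coefficients in the $\beta_m^p$'s (the cuts), so the mass of color $j$ in part $l$ is a rational function $R_{j,l}$ with $\Q$-coefficients in the $\alpha_i$'s, the $\beta_m^p$'s and the $x_{j;i_1,\dots,i_d}$'s. Equating the color-$j$ masses in parts $l$ and $1$ and isolating the contribution $x_{j;i_1^l,\dots,i_d^l}^d$ coming from the untouched $D_{j;i_1^l,\dots,i_d^l}$, I obtain $(k-1)(q-1)$ identities analogous to $(\clubsuit)$ with left-hand sides in $\K(T)$, where $\K = \Q(x_{j;i_1,\dots,i_d} : (i_1,\dots,i_d) \neq (i_1^r,\dots,i_d^r),\, r = 2,\dots,q)$ and $T = \{\alpha_i,\beta_m^p\}$, and right-hand sides the $(k-1)(q-1)$ monomials $x_{j;i_1^l,\dots,i_d^l}^d$. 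The $q-1$ part-volume equalities force at least one $\Q$-algebraic dependence among elements of $T$, so $\trdeg_{\Q} T \leq dt + d$, hence $\trdeg_{\K}(\mathrm{LHS}) \leq dt + d$; algebraic independence of the right-hand sides over $\K$ gives $\trdeg_{\K}(\mathrm{RHS}) = (k-1)(q-1)$. Since the two sides coincide we obtain $(k-1)(q-1) \leq dt + d$, that is $k(q-1) \leq dt + d + q - 1$, contradicting the hypothesis.

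The main subtlety I expect to need extra care is the rational-function bookkeeping underlying Lemma \ref{rat}: the combinatorial type of the dissection (which $D$-cube lies entirely inside which piece, which is crossed by which cut, and in what way) must be locally constant in the parameters for $R_{j,l}$ to be an honest rational function rather than a piecewise one. A small generic perturbation of the $\beta_m^p$'s (invisible to the Baire argument, which only requires closeness to $g$) puts the configuration in general position, with no $D$-cube face on any cut and no two cuts coincident inside $A$, so the combinatorial type is stable in a neighborhood and Lemma \ref{rat} applies uniformly; this is the step at which I would spell out the details most carefully before performing the transcendence-degree comparison.
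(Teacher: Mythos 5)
Your proposal is correct and follows essentially the same route as the paper: closedness as in Lemma \ref{LemmaClosed}, the cube-coloring perturbation with algebraically independent side lengths $x_{j;i_1,\dots,i_d}$, Lemma \ref{rat} to replace the polynomials $W_{j,l}$ by rational functions $R_{j,l}$ in the $d$ parameters per cut, and the transcendence-degree count $\trdeg_{\K}(\mathrm{LHS})\leq dt+d$ versus $\trdeg_{\K}(\mathrm{RHS})=(k-1)(q-1)$. The only remark is that your final ``subtlety'' is a non-issue: one never needs the combinatorial type to be locally constant (nor any perturbation of the $\beta_m^p$, which could destroy fairness), since the argument only uses that the measured value at the actual parameters is the evaluation of some rational function over $\Q$, which Lemma \ref{rat} already provides for the given configuration.
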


Modifying slightly the above argument one may obtain a version for cuboids with inequality $k(q-1)>dt+2d+q-2$. Other modification, similar to the proof of Theorem \ref{TheoremNecklace}, leads to the following theorem.

\begin{theorem}\label{TheoremNecklaceArbitrary}
For every integers $d,k,t\geq 1$ and $q\geq 2$, if $k(q-1)>dt+q-2$, then there is a measurable $k$-coloring of a $d$-dimensional necklace for which there is no fair $q$-splitting using at most $t$ arbitrary hyperplane cuts. The set of such colorings is dense. 
\end{theorem}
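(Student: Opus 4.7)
The plan is to follow the template of Lemma \ref{LemmaMain}, layering the two modifications already encountered in the paper: (i) fixing the necklace $C = [\alpha_1,\alpha_1+\alpha_0] \times \dots \times [\alpha_d,\alpha_d+\alpha_0]$ and absorbing the $\alpha_i$'s into the base field, as in the proof of Theorem \ref{TheoremNecklace}; and (ii) replacing each single parameter $\beta_m$ of an axis-aligned hyperplane by $d$ parameters $\beta_m^1,\dots,\beta_m^d$ of an arbitrary hyperplane, and replacing polynomial volume expressions by rational ones via Lemma \ref{rat}, as in the proof of Theorem \ref{TheoremNecklacesArbitrary}. Define $\mathcal{B}_n$ to be the set of $k$-colorings $f \in \mathcal{M}$ for which $C$, colored by $f\vert_C$, admits a fair $q$-splitting using at most $t$ arbitrary hyperplane cuts of granularity at least $1/n$. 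By Baire it suffices to show that each $\mathcal{B}_n$ is closed and has empty interior.

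Closedness follows by the compactness argument of Lemma \ref{LemmaClosed}: for a sequence $f_m \to f$ with witnessing splittings of $C$, the hyperplane parameters (bounded because the hyperplanes meet $C$ in pieces of granularity at least $1/n$) admit a convergent subsequence, as do the finitely many $q$-labelings of the resulting pieces; the limiting data witnesses $f \in \mathcal{B}_n$.

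For the empty interior, I would approximate a hypothetical $f$ whose $\epsilon$-ball lies in $\mathcal{B}_n$ by a cube coloring $g$ via Lemma \ref{aprox}, and perturb $g$ by inserting inside each grid cube $C_{i_1,\dots,i_d}$ small disjoint axis-aligned cubes $D_{j;i_1,\dots,i_d}$ of color $j$ (for $j = 2,\dots,k$) with rational corners and side length $x_{j;i_1,\dots,i_d}$, chosen so that the whole collection $\{x_{j;i_1,\dots,i_d}\}$ is algebraically independent over $\K_0 := \Q(\alpha_0,\dots,\alpha_d)$. If the perturbed coloring $h$ still lies in $\mathcal{B}_n$, fix a witnessing splitting of $C$ with hyperplane parameters $\beta_m^p$ ($m=1,\dots,t$, $p=1,\dots,d$), and, as in Lemma \ref{LemmaMain}, use granularity to select for each $l=1,\dots,q$ a grid cube $C_{i_1^l,\dots,i_d^l}$ lying entirely in the $l$-th part. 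By Lemma \ref{rat} the mass of color $j$ in part $l$ is a rational function over $\Q$ of the $\alpha_i$, $\beta_m^p$, and $x_{j;i_1,\dots,i_d}$; since $C_{i_1^l,\dots,i_d^l}$ is uncut, the variable $x_{j;i_1^l,\dots,i_d^l}$ appears only through the monomial $x_{j;i_1^l,\dots,i_d^l}^d$, which can be split off. Equating color $j$ in part $l$ with color $j$ in part $1$ for $j=2,\dots,k$ and $l=2,\dots,q$ produces $(k-1)(q-1)$ equations with these distinct monomials on the right and rational expressions on the left lying in $\K(\{\beta_m^p\})$, where $\K := \K_0(x_{j;i_1,\dots,i_d} : (i_1,\dots,i_d) \neq (i_1^r,\dots,i_d^r),\ r=2,\dots,q)$.

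It remains to compare transcendence degrees over $\K$. The equal-volume condition for the $q$ parts gives at least one nontrivial rational dependency among the $\beta_m^p$ over $\K_0 \subset \K$, hence $\trdeg_{\K}(\{\beta_m^p\}) \leq dt - 1$ and the transcendence degree of the set of left-hand sides is at most $dt - 1$. By construction the right-hand sides are algebraically independent over $\K$, giving transcendence degree $(k-1)(q-1)$. Equality of left and right sides then forces $(k-1)(q-1) \leq dt - 1$, i.e.\ $k(q-1) \leq dt + q - 2$, contradicting the hypothesis. The main subtlety I expect is verifying the bound $c \geq 1$: one must check that the volume of some fixed part of $C$, viewed as a rational function in the $\beta_m^p$ over $\K_0$, is not identically constant. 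This is clear because moving any single hyperplane (while keeping the others fixed) strictly changes the volume on each of its sides, so at least one of the $q-1$ equal-volume equations is a nontrivial algebraic relation.
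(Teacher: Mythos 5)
Your proposal is correct and is exactly the argument the paper intends: Theorem \ref{TheoremNecklaceArbitrary} is stated in the paper as following by combining the modification of Theorem \ref{TheoremNecklace} (absorbing the $\alpha_i$ into the base field and choosing the $x_{j;i_1,\dots,i_d}$ algebraically independent over $\Q(\alpha_0,\dots,\alpha_d)$) with that of Theorem \ref{TheoremNecklacesArbitrary} ($d$ parameters per hyperplane and rational volume functions via Lemma \ref{rat}), and your transcendence-degree count $(k-1)(q-1)\leq dt-1$ reproduces the stated bound. Your closing remark on why at least one equal-volume equation is a nontrivial relation addresses a point the paper leaves implicit, at the same level of rigor.
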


For $q=2$ we get a necessary condition, that is $k(q-1)\leq dt$, for mass equipartitioning problem. It is an interesting question if this condition is also sufficient for existence of a fair $q$-splitting of size at most $t$ of a $k$-colored $d$-dimensional necklace. For $(d,k,q,t)=(d,k,2^t,t)$ the question coincides with the conjecture of Ramos \cite{Ra96}. A small support is the positive answer for $(d,k,q,t)=(d,d,2^l,2^l-1)$, which follows directly from $2^l-1$ applications of the Ham Sandwich Theorem for measures \cite{Ma03}. 

\section{Colorings Distinguishing Cubes}\label{SectionColoringsDistinguishingCubes}

Here we are interested in a problem of a slightly different flavour. We say that a measurable coloring of $\R^d$ \emph{distinguishes axis-aligned cubes} if no two nontrivial axis-aligned cubes contain the same measure of every color. What is the minimum number of colors needed for such a coloring?

We get our results in a similar way to the proof of Theorem \ref{TheoremNecklaces}. For each $n$ we define a set $\mathcal{B}_n$ of all $k$-colorings from $\mathcal{M}$ for which there exist two $d$-dimensional cubes $A,B$ contained in $[-n, n]^d$ which have the same measure of every color, and such that $A\setminus B$ contains a translation of the cube $\left(0,\frac{1}{n}\right)^d$. The `bad' colorings are $\mathcal{B}=\bigcup_{n=1}^{\infty}\mathcal{B}_n$. Sets $\mathcal{B}_n$ are closed, and we can easily get an analog of Lemma \ref{LemmaMain}. This shows the following theorem.

\begin{theorem}\label{2d+3}
For every $d\geq 1$ there exists a measurable $(2d+3)$-coloring of $\R^d$ distinguishing axis-aligned cubes. The set of such colorings is dense. 
\end{theorem}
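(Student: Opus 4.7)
The plan is to run exactly the Baire-category strategy from Section \ref{SectionTheSetting} and Section \ref{SectionAxisAlignedHyperplaneCuts}, only replacing ``necklace with a fair $q$-splitting'' by ``pair of axis-aligned cubes with matching color measures''. Concretely, I would take the sets $\mathcal{B}_n\subset\mathcal{M}$ as defined in the statement (pairs of cubes $A,B\subset[-n,n]^d$ with equal measure in every color and with $A\setminus B$ containing a translate of $(0,1/n)^d$), and argue that each $\mathcal{B}_n$ is closed and has empty interior, so that the dense $G_\delta$ set $\mathcal{M}\setminus\bigcup_n\mathcal{B}_n$ consists of $(2d+3)$-colorings distinguishing axis-aligned cubes. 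Closedness follows verbatim from the compactness argument of Lemma \ref{LemmaClosed}: given colorings $f_m\to f$ in $\mathcal{M}$ with pairs $(A_m,B_m)$ witnessing $f_m\in\mathcal{B}_n$, extract limits $A,B\subset[-n,n]^d$; the condition that $A\setminus B$ contains a translate of $(0,1/n)^d$ survives the limit (so in particular $A\ne B$), and measurability plus $L^1$-convergence of characteristic functions on $[-n,n]^d$ preserves equality of color measures. Finally, a coloring outside $\bigcup_n\mathcal{B}_n$ distinguishes cubes, since two distinct cubes with matching measures in every color have equal total measure (equal side length) and so $A\setminus B$ is a nonempty open set containing a translate of $(0,1/n)^d$ for all sufficiently large $n$.

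For the empty-interior step, I imitate Lemma \ref{LemmaMain}: approximate an arbitrary coloring by a cube coloring $g\in\mathcal{I}_n$ and perturb it to $h$ by inserting, inside each grid cube $C_{i_1,\dots,i_d}$ (of side $<1/(2n)$), a white background together with small monochromatic cubes $D_{j;i_1,\dots,i_d}$ of sides $x_{j;i_1,\dots,i_d}$ (for $j=2,\dots,k$) satisfying the four conditions $(1)$--$(4)$ of Lemma \ref{LemmaMain}; in particular $\{x_{j;i_1,\dots,i_d}\}$ is algebraically independent over $\Q$. Suppose $h\in\mathcal{B}_n$, witnessed by cubes $A=\prod[\alpha_i,\alpha_i+\alpha_0]$ and $B=\prod[\beta_i,\beta_i+\beta_0]$. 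Since $A\setminus B$ contains a cube of side $1/n$, it contains at least one grid cube $C_{i_1^A,\dots,i_d^A}$, and hence the small cube $D_{j;i_1^A,\dots,i_d^A}$ sits entirely inside $A$ and entirely outside $B$ for every $j$.

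Now, for each $j=2,\dots,k$, the identity ``measure of color $j$ in $A$ equals measure of color $j$ in $B$'' is a polynomial equation over $\Q$ in the variables $\alpha_i,\beta_i,x_{j;i_1,\dots,i_d}$, in which the variable $x_{j;i_1^A,\dots,i_d^A}$ appears only on the $A$-side and only as the pure power $x_{j;i_1^A,\dots,i_d^A}^d$. Rearranging isolates $k-1$ such pure powers $R=\{x_{j;i_1^A,\dots,i_d^A}^d:j=2,\dots,k\}$ on one side, while the other side $L$ lies in $\K(T)$ with $T=\{\alpha_i,\beta_i:i=0,\dots,d\}$ and $\K=\Q(x_{j;i_1,\dots,i_d}:(i_1,\dots,i_d)\ne(i_1^A,\dots,i_d^A))$. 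Summing all $k$ color equations (which hold by definition of $\mathcal{B}_n$) gives the relation $\alpha_0^d=\beta_0^d$, hence $\alpha_0=\beta_0$, so $\trdeg_\K(T)\le 2d+1$; consequently $\trdeg_\K(L)\le 2d+1$, while condition $(4)$ yields $\trdeg_\K(R)=k-1$. Since $L=R$, we obtain $k-1\le 2d+1$, contradicting $k\ge 2d+3$.

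The only delicate point is securing the dimension drop from $2d+2$ to $2d+1$ that plays the role of the ``equal volume of parts'' drop in Lemma \ref{LemmaMain}; it comes for free here from the automatic relation $\alpha_0=\beta_0$, but it must be used, otherwise the argument only produces $2d+4$ colors. Everything else, including density of the good colorings, is a mechanical transcription of the template already developed in Section \ref{SectionAxisAlignedHyperplaneCuts}.
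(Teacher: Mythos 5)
Your proposal is correct and follows exactly the route the paper intends: the paper only sketches this theorem as ``an analog of Lemma \ref{LemmaMain}'', and your transcription --- $2d+2$ parameters for the pair of cubes, the drop to $2d+1$ via the forced relation $\alpha_0=\beta_0$, and $k-1$ algebraically independent right-hand sides $x_{j;i_1^A,\dots,i_d^A}^d$ coming from a grid cube inside $A\setminus B$ --- is precisely the computation that yields the bound $k\leq 2d+2$ for bad colorings. You are also right that using the volume relation is essential to get $2d+3$ rather than $2d+4$ colors.
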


For $d=1$ this gives the second result of \cite{AlGrLaMi09}, namely the existence of a measurable $5$-coloring of $\R$ with no two equally colored intervals. We suspect that this number of colors is optimal. 

\begin{conjecture}
For every $d\geq 1$ and for every measurable $(2d+2)$-coloring of $\R^d$ there are two nontrivial axis-aligned $d$-dimensional cubes which have the same measure of every color. 
\end{conjecture}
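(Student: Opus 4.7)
\emph{Proof plan} (posed as an open conjecture, so this sketch is aspirational).

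The plan starts from a dimensional coincidence. The space $\mathcal{C}$ of nontrivial axis-aligned cubes in $\R^d$ is parametrized by $(\alpha_0,\alpha_1,\dots,\alpha_d)\in\R_{>0}\times\R^d$, a $(d+1)$-dimensional manifold, and a measurable $k$-coloring $c$ induces a continuous color-measure map
$$\mathcal{F}_c:\mathcal{C}\to\R^k,\qquad C\mapsto(\mu_1(C),\dots,\mu_k(C)).$$
Existence of two distinct cubes with identical color spectrum is exactly non-injectivity of $\mathcal{F}_c$. Theorem \ref{2d+3} gives injectivity for $k=2d+3=2(d+1)+1$, matching Whitney's embedding bound for $(d+1)$-manifolds; the conjectural threshold $k=2d+2=2(d+1)$ sits at the Whitney immersion dimension, where a generic map of a $(d+1)$-manifold into $\R^{2(d+1)}$ has unavoidable transverse double points.

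The concrete attack proceeds by approximation, a compact configuration space, and a Borsuk--Ulam argument. First use Lemma \ref{aprox} to approximate $c$ by cube colorings $c_m$ on finer rational grids, so that $\mathcal{F}_{c_m}$ becomes piecewise polynomial in the cube parameters; it suffices to produce, for each $c_m$ and some large $n$, two distinct cubes $A,B$ in the compact region $K_n$ of cubes with corner in $[-n,n]^d$ and side length in $[1/n,2n]$ satisfying $\mathcal{F}_{c_m}(A)=\mathcal{F}_{c_m}(B)$ with $A\setminus B$ containing a translate of $(0,1/n)^d$, because the closedness of $\mathcal{B}_n$ (the analogue of Lemma \ref{LemmaClosed}) then transfers the conclusion to the original $c$ by a limit in the compact pair-parameter space $K_n\times K_n$. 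On the ordered-pair configuration $X_n\subset K_n\times K_n\setminus\Delta$ equipped with the free $\mathbb{Z}/2$-action by swap, the map $\Phi_c(A,B):=\mathcal{F}_c(A)-\mathcal{F}_c(B)\in\R^{2d+2}$ is $\mathbb{Z}/2$-antiequivariant. After blowing up the diagonal and adjoining degenerate ``side-zero'' cubes at a boundary at infinity, one would try to show that the $\mathbb{Z}/2$-coindex of the compactified $X_n$ exceeds the $\mathbb{Z}/2$-index of $S^{2d+1}\subset\R^{2d+2}\setminus\{0\}$, which is $2d+1$, forcing $\Phi_c$ to vanish on some off-diagonal pair.

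The main obstacle is precisely that any naive compactification sabotages the index computation: along the boundary $\alpha_0\to 0$ the map $\Phi_c$ becomes trivially zero (all color measures vanish), and along the diagonal the antipodal action loses freeness, so a useful equivariant obstruction must be extracted only after careful normalization and must genuinely exploit measure-theoretic features of $\mathcal{F}_c$ that generic continuous maps do not share---non-negativity, additivity, and the scaling $\mathcal{F}_c(C)\sim|C|$ as $|C|\to 0$. This appears to be the heart of why the conjecture remains open, even in the base case $d=1,k=4$, and is where a new idea beyond the algebraic-independence/Baire category framework used elsewhere in the paper seems to be required.
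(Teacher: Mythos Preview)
The statement is stated in the paper as an open \emph{conjecture}; the paper offers no proof and no proof sketch for it. There is therefore nothing in the paper to compare your attempt against.

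You yourself recognize this and label your write-up an ``aspirational'' plan rather than a proof, and you correctly isolate the obstruction: after compactifying the configuration space of ordered pairs of cubes, the difference map $\Phi_c$ vanishes on the boundary $\alpha_0\to 0$ and the swap action degenerates on the diagonal, so the Borsuk--Ulam/index argument as written does not go through. That is a genuine gap, not a technicality: without a normalization that simultaneously keeps the $\mathbb{Z}/2$-action free, keeps $\Phi_c$ nonvanishing on the added boundary, and yields a space whose $\mathbb{Z}/2$-coindex exceeds $2d+1$, the scheme produces no conclusion. Your dimension-counting heuristic (Whitney embedding/immersion thresholds for the $(d{+}1)$-manifold of cubes) is suggestive but not decisive, since $\mathcal{F}_c$ is far from a generic smooth map and the relevant question is injectivity, not immersion. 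In short: your assessment that a new idea beyond the Baire/transcendence-degree framework is needed matches the status the paper assigns to this statement, and what you have is a plausible line of attack, not a proof.
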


A slight modification of the argument leads to a version of Theorem \ref{2d+3} for cuboids with $4d+1$ colors.

Vre\'{c}ica and \v{Z}ivaljevi\'{c} \cite{VrZi13} solved the case of arbitrary continuous probability measures instead of measurable colorings of $\R^d$. They prove that $d+1$ measures are sufficient to distinguish axis-aligned cubes, and that it is tight.

\begin{theorem}[Vre\'{c}ica, \v{Z}ivaljevi\'{c}, \cite{VrZi13}]\label{vrec}
For every $d$ continuous probability measures $\mu_1,\dots,\mu_d$ on $\R^d$ there are two nontrivial axis-aligned cubes $C$ and $C'$ such that $\mu_i(C)=\mu_i(C')$ holds for every $i$. Moreover, there exist $d+1$ continuous probability measures on $\R^d$ distinguishing axis-aligned cubes.
\end{theorem}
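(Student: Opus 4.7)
The theorem combines an obstruction (any $d$ continuous measures leave some pair of cubes indistinguishable) with a matching construction ($d+1$ suffice); the plan treats the two halves separately.

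For the obstruction half, I would parameterize the space of nontrivial axis-aligned cubes in $\R^d$ as $(a,s)\in\R^d\times(0,\infty)\cong\R^{d+1}$ via $(a,s)\mapsto C_{a,s}=\prod_{j=1}^d[a_j,a_j+s]$, and define the measurement map $\Phi\colon\R^{d+1}\to\R^d$ by $\Phi(a,s)=(\mu_1(C_{a,s}),\dots,\mu_d(C_{a,s}))$. Since each $\mu_i$ is continuous (no atoms on the bounding coordinate hyperplanes), $\Phi$ is continuous. If $\Phi$ were injective, composing with an inclusion $\R^d\hookrightarrow\R^{d+1}$ would produce a continuous injection from an open set in $\R^{d+1}$ to $\R^{d+1}$ whose image lies in a hyperplane and hence has empty interior, contradicting Brouwer's invariance of domain. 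Therefore $\Phi$ must identify two distinct parameters, yielding the required cubes $C\ne C'$ with $\mu_i(C)=\mu_i(C')$ for every $i$.

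For the existence half, I would build $d+1$ continuous probability densities $f_1,\dots,f_{d+1}$ on $\R^d$ whose induced map $\Phi\colon\R^{d+1}\to\R^{d+1}$ is a global injection. A natural recipe starts with product densities $f_i(x)=\prod_{j=1}^d g_{i,j}(x_j)$ with strictly positive continuous marginals chosen so that the Jacobian of $\Phi$ is everywhere nonzero; local injectivity then follows by a routine computation with the fundamental theorem of calculus. To upgrade to global injectivity, one can either pursue a monotone sweep—coordinating the $g_{i,j}$ so that a suitable combination of measurements first pins down $s$ and then each $a_j$ in turn—or, in the spirit of the algebraic-independence arguments used elsewhere in this paper, choose the Taylor coefficients of the $g_{i,j}$ algebraically independent over $\Q$, so that any nontrivial coincidence $\Phi(a,s)=\Phi(a',s')$ would force an impossible polynomial relation.

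The main obstacle is precisely this global-injectivity step in the construction: local non-singularity is routine, but ruling out distant collisions between $\Phi(a,s)$ and $\Phi(a',s')$ requires either a delicate analytic sweep or transcendence machinery. The obstruction half, by contrast, reduces cleanly to a dimensional invariant—cube space is $(d+1)$-dimensional while the target is only $d$-dimensional—and is dispatched by a short topological argument via invariance of domain.
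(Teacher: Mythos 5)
A preliminary remark: the paper does not prove this statement. Theorem \ref{vrec} is quoted from Vre\'{c}ica and \v{Z}ivaljevi\'{c} \cite{VrZi13} purely as background for Section \ref{SectionColoringsDistinguishingCubes}, so there is no in-paper proof to compare yours against; your argument can only be assessed on its own terms.

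Your first half is essentially correct and is the natural argument: the nontrivial axis-aligned cubes form an open subset of $\R^{d+1}$, the measurement map $\Phi$ lands in $\R^d$, and invariance of domain forbids a continuous injection from an open subset of $\R^{d+1}$ into a hyperplane of $\R^{d+1}$; non-injectivity of $\Phi$ then yields the two cubes, since distinct parameters give distinct cubes. Two caveats. First, non-atomicity of $\mu_i$ does not by itself make $(a,s)\mapsto\mu_i(C_{a,s})$ continuous --- a non-atomic measure may still charge an axis-aligned hyperplane --- so you must either read ``continuous'' as excluding that, or treat that degenerate case separately. Second, \cite{VrZi13} actually proves the stronger statement that any finite number of pairwise indistinguishable cubes exists, which requires more topology than invariance of domain; your reduction covers only the two-cube case, which is all that is stated here.

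The second half contains the genuine gap: you have not constructed the $d+1$ measures. You reduce the problem to exhibiting densities whose induced map $\Phi\colon\R^{d+1}\to\R^{d+1}$ is globally injective, observe that a nonvanishing Jacobian gives only local injectivity, and then explicitly defer the global step, proposing two strategies (a monotone sweep, or algebraic independence of Taylor coefficients) without carrying either out. Global injectivity is the entire content of this half of the theorem, so a plan ending with ``the main obstacle is precisely this step'' is an honest sketch, not a proof. To close it you would need a concrete choice --- for instance, measures arranged so that one coordinate of $\Phi$ is a strictly monotone function of the side length $s$ alone, after which the remaining $d$ coordinates pin down $a_1,\dots,a_d$ in turn by strict monotonicity in each $a_j$ for fixed $s$. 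The one-dimensional example in Section \ref{SectionColoringsDistinguishingCubes} (the measures with densities $1$ and $e^x$, which would still need to be truncated and normalized to become probability measures) shows the intended shape of such a construction, but the $d$-dimensional version remains to be written down and verified.
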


In fact Vre\'{c}ica and \v{Z}ivaljevi\'{c} show that even one can find any finite number of axis-aligned cubes in $\R^d$ which are not distinguished by given $d$ measures. They also get results analogous to Theorem \ref{vrec} for cuboids in $\R^d$ with $2d-1$ colors. 

For sure, the minimum number of colors in the two cases (arbitrary continuous measures, and measurable colorings of $\R^d$) is different. One can show the following. 

\begin{example} 
There are two measures on $\R$ distinguishing intervals. For example, the ones given by integrals of functions $1$ and $e^x$. 

On the other hand no measurable $2$-coloring of $\R$ distinguishes intervals. It is a consequence of properties of a measurable set (one can find an interval with large or small ratio of the set), and continuity of Lebesgue measure with respect to changes of interval endpoints.
\end{example}

\section{A discrete case}\label{SectionDiscreteCase}

In dimension one a continuous version of the necklace splitting problem was a tool to derive a discrete one, and the minimal number of cuts needed is the same. For higher dimensions we will see that there is no such correspondence. 

For a fixed integer $q\geq 2$ a \emph{$k$-colored discrete $d$-dimensional necklace} is a cube in $\N^d$ partitioned into $k$ sets, each of cardinality divisible by $q$. A \emph{fair $q$-splitting} of size $t$ is a division using $t$ axis-aligned hyperplane cuts into cuboids, which can be partitioned into $q$ parts, each capturing exactly $1/q$ of the total number of points of each color. Alon's necklace splitting theorem \cite{Al87} states that every $k$-colored discrete $1$-dimensional necklace has a fair $q$-splitting using at most $k(q-1)$ cuts. Grytczuk and Lubawski \cite{GrLu12} asked what is the minimum number of axis-aligned hyperplane cuts that are needed to $2$-split fairly any $k$-colored discrete $d$-dimensional necklace. At first glance, it is not clear why this number is finite. We give a partial answer to the more general question with $q$ parts by showing an upper and a lower bound which differ by multiplicative factor $4$. 

\begin{theorem}\label{discrete}
For every $d,k\geq 1$ and $q\geq 2$, the minimum number of axis-aligned hyperplane cuts $t$ that are needed to $q$-split fairly any $k$-colored discrete $d$-dimensional necklace satisfies inequalities $d(k-1)\frac{q}{2}\leq t\leq (2d-1)k(q-1)$.
\end{theorem}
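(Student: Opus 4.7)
The plan is to prove the two inequalities in Theorem~\ref{discrete} separately.

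\emph{Upper bound} $t\le(2d-1)k(q-1)$. My approach is to lift the discrete necklace $C\subset\N^d$ to a continuous one by inflating each integer point into an open unit cube of the same colour, and then apply Alon's Theorem~\ref{al} (or, in multidimensional form, Theorem~\ref{lozi} of de Longueville and \v{Z}ivaljevi\'{c}) to the $k$ induced probability measures. This gives a fair $q$-splitting of the continuous necklace with at most $k(q-1)$ axis-aligned hyperplane cuts at possibly non-integer coordinates. A non-integer cut perpendicular to axis $i$ at $x_i=\alpha\in(m,m+1)$ discretely places all integer points of the slice $\{x_i=m\}$ on one side, creating a discrepancy between the continuous and discrete splittings that is confined to that one slice. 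I would correct each such discrepancy by inserting at most $2(d-1)$ additional axis-aligned hyperplane cuts parallel to the remaining $d-1$ axes inside the offending slice, redistributing integer points between the $q$ parts until the exact colour counts are matched. Summing over at most $k(q-1)$ non-integer cuts yields $k(q-1)+2(d-1)k(q-1)=(2d-1)k(q-1)$ cuts in total.

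\emph{Lower bound} $t\ge d(k-1)q/2$. For this I would construct an explicit bad $k$-colored discrete $d$-dimensional necklace. For each axis $i\in\{1,\ldots,d\}$, I place a one-dimensional sub-necklace $L_i$ extending along axis $i$ at transversal coordinates disjoint from those of the other $L_j$'s, so that only axis-$i$ hyperplane cuts can internally split $L_i$. The colours along each $L_i$ form a hard $1$-dimensional configuration of Goldberg-West type whose fair $q$-splitting requires at least $(k-1)q/2$ cuts, and the colours elsewhere are chosen so that all total colour counts are divisible by $q$ and so that any global fair $q$-splitting must restrict to a fair $q$-splitting of each $L_i$. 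Summing the $(k-1)q/2$ axis-$i$ cuts required for each $i$ produces the claimed lower bound $d(k-1)q/2$.

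The main obstacle is the lower bound: fairness of a $d$-dimensional splitting does not automatically restrict to fairness along each one-dimensional sub-necklace $L_i$, so the colour multiplicities inside and outside the $L_i$'s have to be carefully engineered (for example via disjoint colour palettes per axis, or a suitable background colouring) to force the fairness constraints to decouple across the axes. A secondary technical point is to identify a $1$-dimensional $k$-colored necklace whose fair $q$-splitting provably requires at least $(k-1)q/2$ cuts; this is weaker than Alon's tight $k(q-1)$ bound and should follow from a direct parity/counting argument on a Goldberg-West-style configuration.
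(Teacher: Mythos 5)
Both halves of your proposal have genuine gaps, and in both cases the paper's argument is structurally different. For the upper bound, the ``correction'' step does not work as described: an axis-aligned hyperplane cut is global, so the $2(d-1)$ extra cuts you insert to repair one offending slice necessarily pass through the entire necklace and perturb every other slice as well; moreover, matching the exact colour counts inside a $(d-1)$-dimensional slab of integer points is itself a discrete $(d-1)$-dimensional splitting problem, and there is no reason $2(d-1)$ cuts suffice for it (by induction one would expect on the order of $(2d-3)k(q-1)$ cuts per slab). The paper avoids discretization error entirely: it linearizes the necklace by the lexicographic order, applies Alon's \emph{discrete} theorem to the resulting $1$-dimensional necklace to get at most $k(q-1)$ cuts at integer positions, and then observes that each lexicographic cut (which separates a lexicographic prefix from its complement) can be realized exactly by at most $2d-1$ axis-aligned hyperplanes $z_1=x_1\pm\frac12,\dots,z_{j-1}=x_{j-1}\pm\frac12$, $z_j=\frac{x_j+y_j}{2}$. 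No correction is needed.

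For the lower bound, the obstacle you name at the end is precisely the fatal one, and it is not an engineering detail: a fair $q$-splitting restricts to a fair splitting of each \emph{colour class} (each part gets $1/q$ of each colour), but it does \emph{not} restrict to a fair splitting of a geometric sub-necklace $L_i$, since a surplus of a colour taken from $L_i$ by one part can be compensated anywhere else. Your suggested fix of disjoint colour palettes per axis would resolve the decoupling but destroys the bound: with roughly $k/d$ colours per axis you would only get about $(k/d-1)\frac{q}{2}$ cuts per axis, i.e.\ roughly $(k-d)\frac{q}{2}$ in total rather than $d(k-1)\frac{q}{2}$. You also leave unproved the existence of a $1$-dimensional configuration forcing $(k-1)\frac{q}{2}$ cuts. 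The paper factors the bound the other way, as $(k-1)\times d\frac{q}{2}$ rather than $d\times(k-1)\frac{q}{2}$: a counting argument (comparing the $2^{n^d}$ subsets of $\{1,\dots,n\}^d$ with the number of splittings realizable by $t$ cuts, via Cauchy--Schwarz) shows that \emph{some} single-colour set $N_0$ needs at least $d\frac{q}{2}$ cuts; then $k-1$ copies of $N_0$ in colours $2,\dots,k$ are placed in disjoint blocks along the diagonal, so that no hyperplane meets the interior of two blocks, and since fairness does restrict to each colour class, the cut sets for distinct colours are disjoint and the bounds add. If you want to salvage your approach, you would need to replace the sub-necklaces $L_i$ by objects to which fairness provably restricts, which in effect forces you back to colour classes.
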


\begin{proof}
The bound from above. Let $N$ be a $k$-colored discrete $d$-dimensional necklace. Let $<$ be the lexicographic order on $N$, that is $(x_1,\dots,x_d)<(y_1,\dots,y_d)$ if and only if $x_i=y_i$ for $i<j$ and $x_j<y_j$. We can imagine discrete $d$-dimensional necklace stretched on a line according to the linear order $<$. It forms a discrete $1$-dimensional necklace $N'$. From Alon's necklace splitting theorem \cite{Al87} we get that there is a fair $q$-splitting of $N'$ using at most $k(q-1)$ cuts. Suppose there is a cut between $x<y$, which are consecutive elements in our order. Suppose $x_i=y_i$ for $i<j$ and $x_j<y_j$. We can realize this cut with $2j-1\leq 2d-1$ axis-aligned hyperplane cuts in $\N^d$. Namely by cutting with hyperplanes $z_1=x_1-\frac{1}{2}, z_1=x_1+\frac{1}{2},\dots,z_{j-1}=x_{j-1}-\frac{1}{2},z_{j-1}=x_{j-1}+\frac{1}{2}$, and $z_j=\frac{x_j+y_j}{2}$. Hence $(2d-1)k(q-1)$ hyperplane cuts suffice.

The bound from below. First consider only one color, we are going to show that in general we need at least $d\frac{q}{2}$ cuts to split it. We will make a double counting argument. Pick any subset $N$ of a cube $C=\{1,\dots,n\}^d$, there are $2^{n^d}$ possibilities. Now $t$ cuts can be chosen in at most $(dn)^t$ ways. They divide the cube $C$ into at most $(t+1)^d$ cuboids, so there are at most $q^{(t+1)^d}$ possibilities of splitting these cuboids into $q$ parts. A fixed set of cuts and a labeling of pieces gives a $q$-splitting of $C$ into parts $A_1,\dots,A_q$, say of cardinalities $a_1,\dots,a_q$. We have $a_1+\dots+a_q=n^d$. Let us estimate the number of sets $N$ for which it is a fair $q$-splitting. It is equal to the sum $$\sum_i\binom{a_1}{i}\cdots\binom{a_q}{i},$$ since all parts have to contain the same number of points. It is easy to see that this sum is maximized for $a_i=\left\lceil\frac{1}{q}n^d\right\rceil$ or $\left\lfloor\frac{1}{q}n^d\right\rfloor$. Then from the Cauchy-Schwarz inequality: 
$$\sum_{i}\binom{\frac{1}{q}n^d}{i}^q\leq\left(\sum_{i}\binom{\frac{1}{q}n^d}{i}^2\right)^{\frac{q}{2}}=\binom{\frac{2}{q}n^d}{\frac{1}{q}n^d}^{\frac{q}{2}}=\Theta\left(\frac{2^{n^d}}{n^{d\frac{q}{2}}}\right).$$ 
Hence since we assume that $t$ cuts suffice, we have an inequality:
$$\Theta\left((dn)^tq^{(t+1)^d}\frac{2^{n^d}}{n^{d\frac{q}{2}}}\right)\geq 2^{n^d}.$$
We get that for $n_0$ big enough there is a set $N_0\subset\{1,\dots,n_0\}^d$ for which at least $t\geq d\frac{q}{2}$ cuts are needed. Now when there are $k$ colors, we take $k-1$ copies of $N_0$, place them in $\N^d$ inside cubes $\{1,\dots,n_0\}^d,\dots,((k-2)n_0,\dots,(k-2)n_0)+\{1,\dots,n_0\}^d$, and color using colors $2,\dots,k$ respectively. The remaining part of a big cube $\{1,\dots,(k-1)n_0\}^d$ we color with $1$. Each axis-aligned hyperplane cuts only one of colors $2,\dots,k$, and for each of them we need at least $d\frac{q}{2}$ cuts, so in total at least $d(k-1)\frac{q}{2}$ cuts.
\end{proof}

The gap between an upper and a lower bound from Theorem \ref{discrete} motivates the following generalization of the question of Grytczuk and Lubawski \cite{GrLu12}.

\begin{question}
What is the least possible number $t$, such that any $k$-colored discrete $d$-dimensional necklace has a  fair $q$-splitting using at most $t$ axis aligned hyperplane cuts?
\end{question}

In our opinion the correct answer should be around the value of an upper bound $(2d-1)k(q-1)$. For $d=q=2$ this almost matches with a lower bound $t\geq 3k-2$ implied by a construction due to Petecki cf. \cite{GrLu12}. 

\section*{Acknowledgements}

I would like to thank Piotr Micek and Wojciech Lubawski for the help in preparation of this manuscript. 


\end{document}